\theoremstyle{plain}
\newtheorem{theorem}{Theorem}
\newtheorem{lemma}{Lemma}
\newtheorem{proposition}{Proposition}
\newtheorem{corollary}{Corollary}
\newtheorem{definition}{Definition}
\newtheorem{remark}{Remark}
\newtheorem{conjecture}{Conjecture}
\newcommand{\onehalf}{\frac{1}{2}}
\newcommand{\N}{\mathbb{N}}
\newcommand{\R}{\mathbb{R}}
\newcommand{\s}{\mathbb{S}^2}
\newcommand{\Cone}{\mathcal{C}^1}
\newcommand{\Ctwo}{\mathcal{C}^2}
\newcommand{\ra}{\rightarrow}
\newcommand{\mt}{\mapsto}
\newcommand{\discrep}{\mathcal{D}}
\newcommand{\fatone}{\mathds{1}}
\newcommand{\length}{\text{length}}
\newcommand{\SO}{\mathcal{SO}(3)}
\begin{document}

\author{Damir Ferizovi{\'c}{$^{1}$}}
\thanks{$^{[1]}$ Katholieke Universiteit Leuven, Celestijnenlaan 200B, Leuven, 3001, Belgium. Email: \href{mailto:damir.ferizovic@kuleuven.be}{damir.ferizovic@kuleuven.be}.  Supported by long term structural funding -- Methusalem grant of the Flemish Government; the Austrian Science Fund (FWF): F5503 ``Quasi-Monte Carlo Methods''; and FWF: W1230 ``Doctoral School Discrete Mathematics.''}
\author{Julian Hofstadler{$^{2}$}}
\thanks{$^{[2]}$ Universität Passau, Innstraße 33, Passau, 94032, Germany. Email: \href{mailto:julian.hofstadler@uni-passau.de}{julian.hofstadler@uni-passau.de}. Supported by Deutsche Forschungsgemeinschaft (DFG) within project 432680300 - SFB 1456 (subproject B02) and the Johannes Kepler University Linz.}
\author{Michelle Mastrianni{$^{3}$}}
\thanks{$^{[3]}$ University of Minnesota, 206 Church St. SE, Minneapolis, 55455, Minnesota, USA. Email: \href{mailto:michmast@umn.edu}{michmast@umn.edu}. Supported by DMS-2054606 through the US National Science Foundation.}

\title{The spherical cap discrepancy of HEALPix points}
\begin{abstract}
In this paper we show that the spherical cap discrepancy of the point set given by centers of pixels in the HEALPix tessellation (short for Hierarchical, Equal Area and iso-Latitude Pixelation) of the unit $2$-sphere is lower and upper bounded by order square root of the number of points, and compute explicit constants. This adds to the known collection of explicitly constructed sets whose discrepancy converges with order $N^{-1/2}$, matching the asymptotic order for i.i.d. random point sets. We describe the HEALPix framework in more detail and give explicit formulas for the boundaries and pixel centers. We then introduce the notion of an $n$-convex curve and prove an upper bound on how many fundamental domains are intersected by such curves, and in particular we show that boundaries of spherical caps have this property. Lastly, we mention briefly that a jittered sampling technique works in the HEALPix framework as well.
\end{abstract}
\maketitle

\section{Introduction}
\indent HEALPix, short for ``Hierarchical, Equal Area and iso-Latitude Pixelation,'' is an algorithm introduced in \cite{Gorski} to discretize  $\s$ and has been widely used in cosmology and astrophysics. Given $\ell \in \N_0$, the algorithm produces a tessellation starting with 12 \textit{base pixels} followed by a further binary partition up to resolution $\ell$. In this paper we study the spherical cap discrepancy of the point set given by the centers of the HEALPix pixels. We discuss the tessellation algorithm in more detail in \textsection \ref{sec:pixel}  and give a precise definition of $\Gamma$, a projection of the sphere onto the plane, in \textsection \ref{subsec_Gamma}. 
\begin{definition}\label{def_HealpixPoints}
	Let $N=12\cdot 4^{\ell}$ and let $\Gamma$ be as defined in \textsection \ref{subsec_Gamma}. Then the HEALPix points $H_N$ on $\s$  are defined as the preimage under $\Gamma$ of the centers of squares  obtained by the image of the HEALPix tessellation under $\Gamma$.
\end{definition}

In order to quantify how well-distributed HEALPix points are on $\s$, we use the spherical cap discrepancy. We refer for other notions of discrepancy to the texts \cite{Matousek} and \cite{CST} and for their applications to the surveys \cite{BrauchartGrabner} and \cite{KuijlaarsSaff}. There are other ways to quantify uniform distribution. For example, one can consider the \emph{Riesz $s$-energy} of a point set: see \cite{BorodachovHardinSaff} for a comprehensive monograph on these topics.

Let $\s = \{z \in \mathbb{R}^3: \|z\| =1\}$ denote the unit sphere in the Euclidean space $\mathbb{R}^3$, where $\|\cdot\|$ is induced by the standard inner product $\langle x, y \rangle$. A \emph{spherical cap} with center $w \in\s$ and height $t\in [-1,1]$ is given by the set
$$C(w,t)=\{x \in\s\ :\ \langle x,w \rangle\geq t\}.$$


\begin{definition}[Local spherical cap discrepancy]
	Let $Z_N=\{z_1,\ldots,z_N\}\subset\s$. 
	The local spherical cap discrepancy of $Z_N$ with respect to a cap $C(w,t)$ is
	$$\mathcal{D}_{C(w,t)}(Z_N)= \Bigl\lvert\frac{1}{N}\sum_{j=1}^{N} \fatone_{C(w,t)}(z_j)-\sigma(C(w,t))\Bigr\rvert,$$
 where $\sigma$ is the normalized uniform measure on $\mathbb{S}^2$. 
 \end{definition}
 

\begin{definition}[Spherical cap discrepancy]
	Let $Z_N=\{z_1,\ldots,z_N\}\subset\s$. 
	The spherical cap discrepancy of $Z_N$ is
	$$\discrep(Z_N)=\sup_{w\in \s} \sup_{-1\leq t\leq1} \discrep_{C(w,t)}(Z_N).$$
 \end{definition}


Lubotzky, Phillips and Sarnak in \cite{LPS} were the first to achieve an upper bound for the spherical cap discrepancy of a deterministic point set, of the order $N^{-1/3} \log^{2/3} N$. Aistleitner, Brauchart, and Dick showed in \cite{BrauchartAistleitnerDick} that for i.i.d. random point sets, spherical digital nets, and spherical Fibonacci lattices, the spherical cap discrepancy is at most of order $N^{-1/2}$. The latter two point sets are deterministic, and a further such example is given by the Diamond ensemble introduced first in \cite{BE}, which has cap discrepancy of order $N^{-1/2}$ as shown in \cite{Etayo}. Beck showed in \cite{Beck1} that there exist point sets $\omega_N^{\star}$ which achieve even lower order discrepancy:
there are constants $c, C > 0$ independent of $N$ such that 
\begin{equation}
\label{eq:beckbound}
cN^{-3/4} \le 
\discrep(\omega_N^{\star}) \le CN^{-3/4}\sqrt{\log N}
\end{equation}
where the lower bound holds for all $N$-point sets. Numerical results indicate that Fibonacci lattices achieve discrepancy of an order
close to Beck's upper bound. On the other hand, probabilistic point sets with discrepancy matching this upper bound with high probability were  obtained in \cite{AlishahiZamani} and later in \cite{BMO}.

Our main result is as follows and extends Proposition 2.4 in \cite{HardinMichaelsSaff}, where $H_N$  are shown to be asymptotically uniformly distributed.

\begin{theorem}
\label{thm:main}
Let $H_N$ be the set of centers of pixels in a HEALPix tessellation of $\s$ with $N$ pixels. Then 
$$\frac{2}{\sqrt{3}}N^{-1/2} \le \discrep(H_N) \le \frac{2(5+\sqrt{2})}{\sqrt{3}} N^{-1/2} + 1000 N^{-1}.$$
\end{theorem}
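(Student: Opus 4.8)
For the lower bound, I would exhibit a single well-chosen spherical cap witnessing discrepancy of order $N^{-1/2}$. The natural candidates are caps whose boundary latitude falls "between" two iso-latitude rings of HEALPix points, or caps aligned with a base pixel, where the extreme equal-area structure of HEALPix forces a mismatch of order $\sqrt{N}$ points between the count inside the cap and $N\sigma(C)$. Concretely, I expect one takes a polar cap $C(w,t)$ with $w$ the north pole and $t$ chosen so that the cap area is an integer multiple of $1/N$ plus a carefully tuned remainder; since HEALPix points lie on $O(\sqrt{N})$ rings with $O(\sqrt{N})$ points each, moving the boundary across one ring changes the count by $\Theta(\sqrt{N})$, and one can always position it to be off by $\gtrsim \sqrt{N}/\sqrt{3}$ from the expected fractional count. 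Chasing the constant $\tfrac{2}{\sqrt 3}$ will require using the exact latitudes of HEALPix rings from the formulas in \textsection\ref{sec:pixel}.

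**The upper bound is the substantial part, and the strategy is a Koksma--Hlawka-type boundary-counting argument in the projected (planar) picture.** Via $\Gamma$, the HEALPix tessellation becomes a union of axis-aligned congruent squares, each containing exactly one of the $N$ points at its center, and $\sigma$ pushes forward to (a constant multiple of) Lebesgue measure on the planar domain. For a spherical cap $C(w,t)$, the key observation is:
$$
\Bigl\lvert \tfrac1N\sum_{j} \fatone_{C}(z_j) - \sigma(C)\Bigr\rvert \le \frac1N \cdot \#\{\text{squares whose interior meets } \partial(\Gamma(C))\},
$$
because for every square \emph{not} meeting the boundary, the indicator of its center agrees with $N$ times its area contribution (the square is entirely inside or entirely outside). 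So everything reduces to bounding the number of fundamental domains (pixels) that the projected cap boundary $\Gamma(\partial C)$ crosses. This is exactly where the notion of an \emph{$n$-convex curve} introduced earlier in the paper enters: one shows $\Gamma(\partial C)$ is $n$-convex for a small absolute $n$ (the excerpt promises "boundaries of spherical caps have this property"), and then invokes the earlier theorem bounding the number of fundamental domains intersected by an $n$-convex curve in terms of the resolution $\ell$, i.e.\ in terms of $\sqrt{N}$.

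**The main obstacle, and where the explicit constants come from, is the geometry of $\Gamma(\partial C)$ and the case analysis over the HEALPix regions.** The projection $\Gamma$ is piecewise-defined (equatorial belt vs.\ polar caps, with the polar regions sent to triangular pieces), so the image of a spherical cap boundary is a curve that is analytic on each piece but only continuous across the interfaces; one must verify the $n$-convexity (bounded number of intersections with any grid line, after accounting for the twelve base faces) uniformly over all $(w,t)$, including degenerate caps (half-spheres, caps shrinking to a point, caps centered at a pole vs.\ on the equator). Counting crossings then yields a bound of the form $(\text{const})\cdot 2^\ell + \text{const}$; since $2^\ell = \sqrt{N/12}$, the leading term becomes $\tfrac{2(5+\sqrt 2)}{\sqrt 3}N^{-1/2}$ after dividing by $N$, with the $1000N^{-1}$ absorbing all lower-order contributions (boundary pixels of the base faces, the $O(1)$ interface crossings, rounding in the ring structure). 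I would organize the write-up as: (i) reduce to boundary-pixel counting; (ii) recall $\Gamma$ and the grid; (iii) establish $n$-convexity of $\Gamma(\partial C)$ with explicit $n$; (iv) apply the $n$-convex curve theorem and tally constants; (v) handle the lower bound by the explicit cap construction. The delicate bookkeeping of constants in step (iv) is what I expect to consume most of the effort.
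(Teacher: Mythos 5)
Your proposal follows essentially the same route as the paper: the upper bound is reduced, via the equal-area property, to counting pixels whose projected images meet $\Gamma(\partial C)$, and that count is controlled by the length bound on $\Gamma(\partial C)$ together with the $n$-convexity intersection lemma, with $2^{\ell}=\sqrt{N/12}$ producing the stated constant; the lower bound comes from a single polar cap whose boundary sits just below the equatorial ring of $\sqrt{4N/3}$ pixel centers. This matches the paper's proof in both structure and in where the constants originate.
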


To obtain the coefficient of the leading term in the upper bound, we bound the maximum number $I_{H_N}$ of pixels of the HEALPix tessellation that are intersected by spherical caps. We believe that the constant could be improved to $4/\sqrt{3}$, which would make the HEALPix points the deterministic algorithm with the lowest-known constant in the discrepancy upper bound. This would follow from the proof technique used later and from the following conjecture.

\begin{conjecture}
$I_{H_N} = 2^{3+\ell}$, where $N = 12\cdot 4^{\ell}$.
\end{conjecture}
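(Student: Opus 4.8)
The plan is to prove the equality by establishing the two inequalities separately, both of which become planar problems once we push everything through $\Gamma$. Under $\Gamma$ the HEALPix tessellation of resolution $\ell$ becomes a grid of $N=12\cdot4^{\ell}$ congruent squares of side $h\asymp2^{-\ell}$, grouped into twelve $2^{\ell}\times2^{\ell}$ blocks arranged in the HEALPix layout of \textsection\ref{sec:pixel}, and the boundary of a cap $C(w,t)$ becomes a planar curve $\gamma=\Gamma(\partial C(w,t))$, possibly broken by the seams of the interrupted projection into finitely many arcs. The number of pixels that $C(w,t)$ meets but neither contains nor avoids equals the number of grid squares crossed by $\gamma$, up to a bounded error from tangencies and from $\gamma$ passing through grid vertices; since such degeneracies affect only the $O(N^{-1})$ term in the discrepancy estimate, one may perturb $(w,t)$ and assume $\gamma$ is transverse to the grid and meets no vertex. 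Thus $I_{H_N}$ equals the maximum over all such $\gamma$ of the number of squares crossed, and the claim is that this maximum is exactly $8\cdot2^{\ell}$.

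For the upper bound $I_{H_N}\le2^{3+\ell}$ one uses that $\gamma$ is $n$-convex, as established earlier in the paper: $\gamma$ is a union of $n$ convex arcs, and cutting further at the $O(1)$ points where it crosses a block boundary, changes zone, or has a tangent parallel to a pixel edge, one obtains $O(1)$ arcs each monotone with respect to the two edge directions of the block containing it, hence crossing that block's subgrid like a staircase. A monotone arc in one block meets at most one square plus one more for each internal grid line it crosses, so summing over arcs and blocks the total is at most $O(1)$ --- the number of arcs, absorbed into the $O(N^{-1})$ term --- plus $h^{-1}$ times the total variation of $\gamma$ measured transversally to the pixel edges in each zone. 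The conjectured upper bound thus reduces to the metric estimate that this edge-transversal total variation of a projected cap boundary never exceeds $8\cdot2^{\ell}h$; because $\Gamma$ is area preserving its Jacobian is pinned down, and I would expect the extremal caps to be great circles and prove the estimate by writing $\Gamma$ piecewise (cylindrical equal-area on the equatorial zone, interrupted Collignon on the polar caps, from \textsection\ref{subsec_Gamma}) and bounding, zone by zone, how many of the twelve blocks $\gamma$ can enter and how far it can wind in each. Making this sharp --- passing from the $10+2\sqrt2$ of Theorem \ref{thm:main} down to $8$ --- is where I expect the real difficulty, and the reason the statement is only a conjecture: it demands the exact worst-case inclination of $\gamma$ relative to the pixel edges in each zone together with the exact list of blocks a single cap boundary can visit, and it is the polar caps, where the Collignon distortion near the seams is largest, that force the current argument to lose the factor $(10+2\sqrt2)/8$.

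The lower bound $I_{H_N}\ge2^{3+\ell}$ is the more accessible direction --- indeed the construction behind the discrepancy lower bound in Theorem \ref{thm:main} should already furnish a cap crossing essentially this many pixels --- so the substance of the conjecture lies in the upper bound. To make the lower bound explicit one would exhibit a single cap (we expect a hemisphere $C(w,0)$ whose bounding great circle sits in a ``diagonal'' position, so that $\gamma$ runs roughly parallel to the pixel edges through a longest possible chain of base-pixel blocks) and verify, from the explicit pixel-center and pixel-boundary formulas of \textsection\ref{sec:pixel}, that $\gamma$ severs $8\cdot2^{\ell}$ pixels in total; a short stability check (the crossing count is locally constant in $w$ away from tangencies and vertex hits) shows the value is attained and not merely approached. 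Combining the two inequalities gives $I_{H_N}=2^{3+\ell}$, and feeding this, via the proof technique of Theorem \ref{thm:main}, into the discrepancy bound replaces the leading constant $2(5+\sqrt2)/\sqrt3$ by $4/\sqrt3$.
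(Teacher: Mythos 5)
This statement is stated in the paper as an open conjecture; the authors do not prove it, and indeed the whole point of the surrounding discussion is that they can only establish the weaker bound $I_{H_N}\le 4(5+\sqrt{2})2^{\ell}+1000$ (Lemma \ref{lemma:amount_intersections}), which is why the leading constant in Theorem \ref{thm:main} is $2(5+\sqrt2)/\sqrt3$ rather than the conjectured $4/\sqrt3$. So there is no proof in the paper to compare yours against, and your proposal does not close the gap either: the entire content of the conjecture is concentrated in the one step you explicitly leave open, namely the sharp estimate that the ``edge-transversal total variation'' of $\Gamma(\partial C(w,t))$ never exceeds $8\cdot 2^{\ell}h$. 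Everything before that point (reduction to a planar grid-crossing count, decomposition into $O(1)$ monotone arcs via $n$-convexity, staircase counting per block) is exactly the machinery the paper already uses in \textsection\ref{subsec:convexcurve} and \textsection\ref{subsec:prop1proof}; it is what yields $5+\sqrt2$ for the length of $\beta$ and hence the constant $2(5+\sqrt2)/\sqrt3$. Asserting that the extremal transversal variation is $8\cdot2^{\ell}h$ and that great circles are extremal is a restatement of the conjecture, not a proof of it. To actually prove the upper bound you would need, at minimum, a zone-by-zone computation of the worst-case sum $\sum_i(a_i+c_i)$ in the coordinates adapted to the pixel edges (i.e.\ after the $45^\circ$ rotation used in the proof of Lemma \ref{lemma:amount_intersections}), including the Collignon-distorted polar blocks and the seams, together with an argument that the several additive losses in Lemma \ref{lem_trapezoidbound} and Lemma \ref{lem_sharplengthbound} (the ``$+4$'' per monotone piece, the $19n-m+1$ term) genuinely disappear from the leading order for every cap simultaneously. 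None of that is supplied.

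The lower bound direction of your proposal is sound in outline and is the accessible half: a cap $C(p_n,t)$ with $t$ slightly off $0$ makes $\Gamma(\partial C)$ a horizontal line just off a row of diamond centers, so it alternates between two adjacent rows of subpixels and crosses about $2\cdot 4\cdot 2^{\ell}=2^{3+\ell}$ of them; this is consistent with the equator-count used for the discrepancy lower bound in \textsection\ref{sec:discrepancybounds}. But even here you would need to verify the exact count $2^{3+\ell}$ (not merely the order) from the explicit boundary formulas, including what happens at the four base-pixel seams. In short: your plan correctly identifies where the difficulty sits, but the proposal proves neither inequality, and the statement remains a conjecture.
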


Much of our paper is devoted to showing that $I_{H_N} \le 4(5+\sqrt{2})2^{\ell} + 1000$. We do so by projecting the boundaries of spherical caps under the HEALPix projection $\Gamma$, bounding their lengths, and employing a lemma that relates the length of a curve to the number of intersections with the projected subpixels of the HEALPix tessellation.\\

In \textsection \ref{sec:pixel} of the paper, we describe the HEALPix tessellation in more detail and give explicit formulas for the boundaries. (The appendix includes a more detailed indexing of the pixel centers for the reader's convenience.) We also parametrize the boundaries of spherical caps which are used throughout the text. \textsection \ref{sec:projection} introduces the projection $\Gamma: \s \rightarrow \mathbb{R}^2$ of HEALPix pixels into the plane and some key properties. The main theorem is proved in \textsection \ref{sec:discrepancybounds}. For the upper bound of the discrepancy, we use a bound on the length of $\Gamma(\partial C)$ for spherical caps $C$, and then employ a classical approach. The lower bound of the same order is then established by considering a particular spherical cap. Lastly, we mention that a jittered sampling technique in the HEALPix lattice yields Beck's upper bound \eqref{eq:beckbound}.  In \textsection \ref{sec:calculations}  we will introduce the notion of an $n$-convex curve and prove an upper bound on how many fundamental domains are intersected by such curves, as recently studied by the first-named author in \cite{Fer}. This upper bound depends directly on the length of the curve, so we then show a length bound for boundaries of spherical caps. The last part of the section is the final ingredient to show that the boundaries of spherical caps are $n$-convex.

\section{Boundaries of the HEALPix pixels and spherical caps}
\label{sec:pixel}
Equal area partitions of the sphere give rise to point sets with good distribution properties; examples of such partitions are given in \cite{BRV} and \cite{RSZ}. The HEALPix tessellation is another such example. The HEALPix algorithm introduces a division of the sphere into 12 spherical rectangles (\textit{base pixels}) of equal area, each of which allows a dyadic decomposition into smaller rectangles and satisfies the following three properties.

\begin{enumerate}
    \item \emph{Hierarchical structure of the database.} The resolution of a particular tessellation increases by dividing each pixel into four smaller ones. The hierarchical nature of the HEALPix algorithm allows us to easily define the center and boundaries of each pixel.
    \item \emph{Equal areas of pixels.} Ensuring equal area of pixels in a HEALPix tessellation of any resolution  is advantageous in a number of physical contexts.
    \item \emph{Iso-Latitude distribution of pixels.} As in item 1. above, this property is useful in allowing us to precisely define the centers and boundaries of the pixels.
\end{enumerate}

We introduce below the general formulas for the boundaries of pixels, following \cite{Gorski}. 

\subsection{HEALPix tessellation algorithm}
The sphere is separated into three regions: the north pole cap, the equatorial belt and the south pole cap. The lines of separation between the regions are given via intersection of $\s$ with $\mathbb{R}^2 \times \{\pm \frac{2}{3}\}$ (see Figure \ref{fig:sphereandgamma}).

We refer to the 12 base pixels, which form the basis of the subdivision algorithm, as the level 0 HEALPix tessellation $\mathcal{T}_{0}$: shown in Figure \ref{fig:tesselations} below. Note that four of the base pixels lie entirely within the equatorial belt; four pixels share a vertex at the north pole; and the southern hemisphere is a copy of the northern with negated $z$-value.

The level $\ell$ tessellation $\mathcal{T}_{\ell}$ takes a dyadic decomposition of each base pixel into $4^{\ell}$ subpixels of equal area. In total there are $12\cdot 4^{\ell}$ many pixels in $\mathcal{T}_{\ell}$. We refer to $\ell$ as the \emph{resolution parameter} or \emph{level} of a tessellation.

\begin{figure}[h!]
\centering
\includegraphics[scale=0.5]{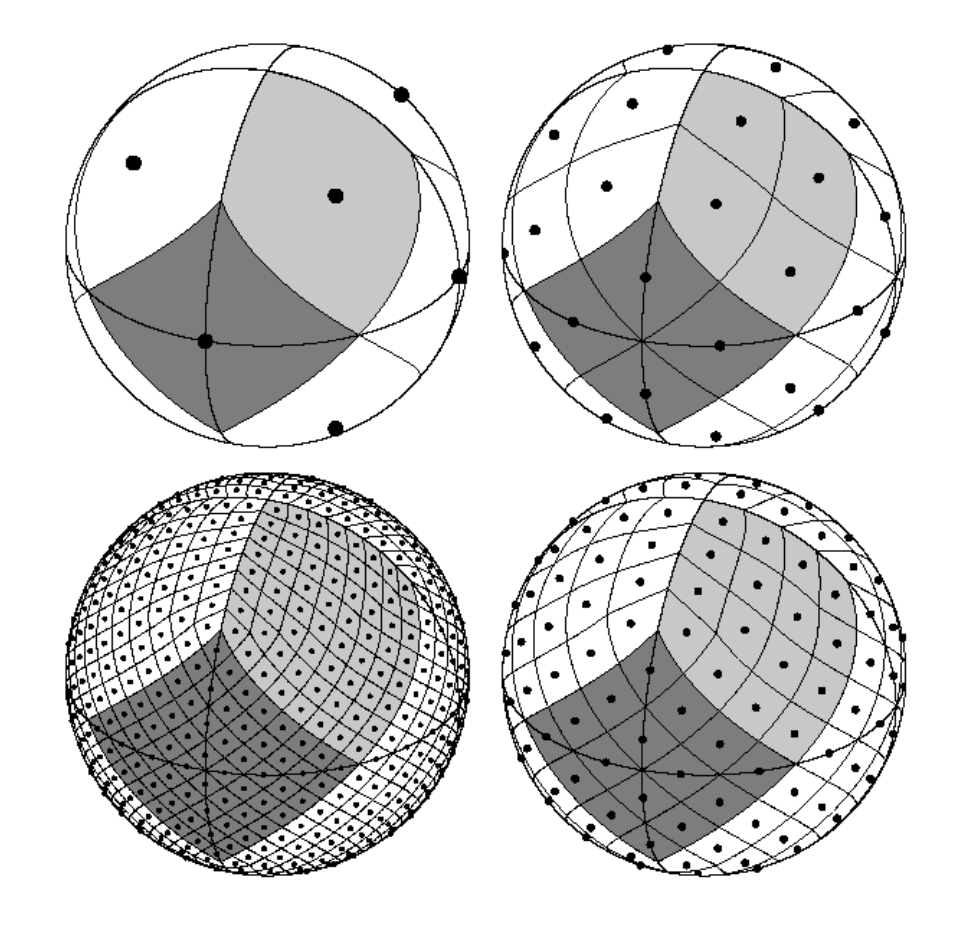} 
\caption{\centering HEALPix tessellations at levels 0, 1, 2, and 3. \copyright AAS. Reproduced with kind permission from the authors of \cite{Gorski} and The Astrophysical Journal. See DOI: 10.1086/427976.}
\label{fig:tesselations}
\end{figure}

\subsection{Boundaries of pixels in equatorial belt}
\label{subsec:boundaries}
First consider the base pixels that are entirely contained within the equatorial belt. Suppose the resolution parameter of the HEALPix tesselation is $\ell$, i.e. each base pixel is divided into $4^{\ell}$ rectangular shapes of equal area and similar diameter. Let $L = 2^{\ell}$. Then we define for $j\in\{0,1,\ldots,4L-1\}$

\begin{center}
$\begin{array}{rcl}
\Phi^{\ell}_j:I_j^{\ell} &\ra& \big[0,\frac{\pi}{2}\big] \\
\phi &\mt&\phi-\frac{j}{L}\frac{\pi}{2}
\end{array}
 \hspace{0.5cm}\mbox{with}\hspace{0.5cm} I_j^{\ell}:=\Big[\frac{j}{L}\frac{\pi}{2},\frac{j+L}{L}\frac{\pi}{2}\Big].$ 
 \end{center}
 
\noindent The pixel boundaries of resolution parameter $\ell$ are obtained via curves $m_{j,\ell}^e$ from north to south, and curves $p_{j,\ell}^e$ from south to north, implicitly given by the lines
\[m_{j,\ell}^e \sim \cos(\theta) = \frac{2}{3}-\frac{8}{3\pi}\Phi_j^{\ell}(\phi) \hspace{0.25cm} \text{  and  } \hspace{0.25cm} p_{j,\ell}^e \sim \cos(\theta) = -\frac{2}{3}+\frac{8}{3\pi}\Phi_j^{\ell}(\phi),\]
meaning that $m_{j,\ell}^e=\big(\cos( \phi)\sin(\theta),\sin(\phi)\sin(\theta),\cos(\theta)\big)$ for $\phi \in I_j^{\ell}$ and $\theta = \theta(\ell,j)$.

\subsection{Boundaries of pixels in north and south pole caps}


We will only describe the boundaries for pixels in the north pole cap, as those on the south pole cap are obtained by changing the sign of the $z$-value. The boundaries for the base pixels are given by a fixed $\phi \in \{0,\frac{\pi}{2},\pi,\frac{3\pi}{2}\}$ and all $0 \le \theta \le \arccos(\frac{2}{3})$. We index the four base pixels with vertex at the north pole by $k \in \{0,1,2,3\}$. If $\ell \ge 1$ let $L = 2^{\ell}$; we define for $k$ and $j\in\{1,\ldots,L-1\}$:
$$
\begin{array}{rcl}
\Phi^{\ell,k}_j:E_j^{\ell,k}&\ra& \big[\frac{j}{L}\frac{\pi}{2},\frac{\pi}{2} \big] \\
\phi&\mt&(k+1)\frac{\pi}{2}-\phi
\end{array}
\hspace{0.5cm}\mbox{with}\hspace{0.5cm} E_j^{\ell,k}:=\Big[k\frac{\pi}{2}, (k+1)\frac{\pi}{2}-\frac{j}{L}\frac{\pi}{2}\Big];$$
and further
$$
\begin{array}{rcl}
\psi^{\ell,k}_j:A_j^{\ell,k}&\ra& \big[\frac{j}{L}\frac{\pi}{2},\frac{\pi}{2} \big) \\
\phi&\mt&\phi-k\frac{\pi}{2}
\end{array}
\hspace{0.5cm}\mbox{with}\hspace{0.5cm} A_j^{\ell,k}:=\Big[ k\frac{\pi}{2}+\frac{j}{L}\frac{\pi}{2},(k+1)\frac{\pi}{2}\Big).$$
The pixel boundaries of resolution parameter $\ell$ are obtained via curves $m^n_{j,k,\ell}$ from north to south, and curves $p^n_{j,k,\ell}$ from south to north, implicitly given by the lines (where $\theta = \theta(j,k,\ell)$):
\begin{align*} &m^n_{j,k,\ell}\sim \cos(\theta)= 1-\frac{j^2}{3L^2}\Big(\frac{\pi}{2\Phi_j^{k,\ell}(\phi)}\Big)^2 \hspace{0.2cm} \text{ and }
\\
 & p^n_{j,k,\ell} \sim \cos(\theta)=1-\frac{j^2}{3L^2}\Big(\frac{\pi}{2\psi_j^{\ell,k}(\phi)}\Big)^2. \end{align*}
 
 \subsection{Parametrizations of spherical cap boundaries}
\label{section:curvature}

In all that follows, $\gamma: [0,2\pi)\times  (0,\pi) \rightarrow \s_p$ is the standard parametrization of the (punctured) sphere defined below: \begin{equation}
\label{eq:smallgamma}
\begin{array}{rcl}
(\phi,\theta)&\mt&\gamma(\phi,\theta)=\left(\begin{array}{l}
\cos(\phi)\sin(\theta)\\
\sin(\phi)\sin(\theta)\\
\cos(\theta)
\end{array}\right).
\end{array}
\end{equation}

(We exclude the poles to obtain an injection, but in what follows we sometimes abuse notation and plug in points of the form $(\phi, 0)$ and $(\phi, \pi)$.) Let $C = C(w,t)$ be a spherical cap with boundary $\partial C$, and $\gamma(\phi_w,\theta_w)=w\in\s$ and $t\in[0,1)$, then we have by the trigonometric angle sum formulas
\begin{equation}\label{eq_boundaryC}
	\partial C=\Big\{\gamma(\phi,\theta)\ :\ \sin(\theta)\sin(\theta_w)\cos(\phi-\phi_w)+\cos(\theta)\cos(\theta_w)=t\Big\}.
\end{equation}
Let $c_w:=\cos(\theta_w)$ and $s_w:=\sin(\theta_w)$. If $s_w=0$, then $w$ is one of the poles $p_n$ or $p_s$ and

$$ \partial C(p_{n,s})=\Big\{\gamma (\phi,\arccos\big(\tfrac{t}{c_w}\big))\ :\ \phi\in[0,2\pi)\Big\}.$$
In all other cases we can write
\begin{align}
\phi&=\phi_w+\arccos\Big(\frac{t-\cos(\theta)c_w}{\sin(\theta)s_w}\Big)\mbox{ and/or }\label{parametrization1}\\
	\phi&=\phi_w-\arccos\Big(\frac{t-\cos(\theta)c_w}{\sin(\theta)s_w}\Big)\label{parametrization2},
\end{align}
since the cosine from \eqref{eq_boundaryC} depends on $\lvert\phi-\phi_w\rvert$, and where $$\theta\in\big[\lvert \arccos(t)-\theta_w \rvert,\arccos(t)+\theta_w\big]$$ with $t\in[0,1)$ and $\theta_w\in[0,\frac{\pi}{2}]$.

As the curvature and other properties for both parametrizations behave similarly, we will be using only parametrization \eqref{parametrization1} in the curvature calculations in \textsection \ref{subsec:curvcalc} and elsewhere.  \\ 

If the north pole is in $\partial C$ but the south pole is not, then $c_w=t$, and since 
$1-\cos(x)=\tan\big(\frac{x}{2}\big)\sin(x)$, we obtain from \eqref{parametrization1}
\begin{equation}\label{eq_northpoleparametrization}
	\phi=\arccos\Big(\frac{ t}{\sqrt{1-t^2}}\tan\big(\tfrac{\theta}{2}\big)\Big)+\phi_w.
\end{equation}
 If the south pole is in $\partial C$ but the north pole is not, then $c_w = -t$, and since $1+\cos(x)=\cot\big(\frac{x}{2}\big)\sin(x)$  we obtain
 \begin{equation}\label{eq_southpoleparametrization}
 	\phi=\arccos\Big(\frac{ t}{\sqrt{1-t^2}}\cot\big(\tfrac{\theta}{2}\big)\Big)+\phi_w.
 \end{equation}
 If the north and south poles are in $\partial C$, then clearly $\theta_w=\frac{\pi}{2}$ and \eqref{eq_boundaryC} is equivalent to 
 $$\sin(\theta)\cos(\phi-\phi_w)=t=0,$$
  and the only possibility for a circle satisfying this relation and going through the poles is to have  $$\partial C = \gamma\Big(\{\phi_w+\frac{\pi}{2},\phi_w+\frac{3\pi}{2}\}\times[0,\pi]\Big).$$
   

\section{Projection into the Plane}

\label{sec:projection}

In this section we define the projection map $\Gamma$ of the HEALPix tessellation into the plane, as defined in \cite{Gorski}, show that it preserves area up to scaling, and study boundaries of spherical caps under $\Gamma$. 

Each of the eight base pixels which have one vertex at a pole are simply rotated copies of the others. The remaining four base pixels have centers on the equatorial belt, and again each of these four base pixels are rotated copies of the others.  Thus we have some rotational symmetry. 

 \subsection{Definition of $\Gamma$}\label{subsec_Gamma}
 
 The projection map
 $$\begin{array}{rcl}
\Gamma:\s_p &\ra& R \\
x&\mt&\Gamma(x)
\end{array}
$$
 from the sphere to $R:=[0,2)\times (-\onehalf,\onehalf)$ 
 is defined in the following fashion, where $\gamma$ is as in \eqref{eq:smallgamma}.
 
\begin{itemize}
\item If $\lvert \cos(\theta)\rvert \leq\frac{2}{3}$, then  	
$$\begin{array}{rcl} x=\gamma(\phi,\theta)&\mt&\Gamma(x)=\left(\begin{array}{l} \phi/\pi \\ 3/8\cos(\theta) \end{array}\right).
	\end{array}
	$$
\item If $\cos(\theta)>\frac{2}{3}$, then
	$$
	\begin{array}{rcl}
	x=\gamma(\phi,\theta)&\mt&\Gamma(x)=\dfrac{1}{\pi}\left(\begin{array}{l}
	\phi-\big(1-\sqrt{1-\cos(\theta)}\sqrt{3}\big)\cdot(\phi\hspace{-0.2cm}\mod \frac{\pi}{2}-\frac{\pi}{4})\\
	\frac{\pi}{4}\big(2-\sqrt{1-\cos(\theta)}\sqrt{3}\big)
	\end{array}\right).
	\end{array}
	$$
\item If $\cos(\theta)<-\frac{2}{3}$, then
	$$
	\begin{array}{rcl}
	x=\gamma(\phi,\theta)&\mt&\Gamma(x)=\dfrac{1}{\pi}\left(\begin{array}{l}
	\phi-\big(1-\sqrt{1-\lvert\cos(\theta)\rvert}\sqrt{3}\big)\cdot(\phi\hspace{-0.2cm}\mod \frac{\pi}{2}-\frac{\pi}{4})\\
	\frac{\pi}{4}\big(\sqrt{1-\lvert\cos(\theta)\rvert}\sqrt{3}-2\big)
	\end{array}\right).
	\end{array}
	$$
\end{itemize}

\begin{figure}[h!]
\begin{center}
\includegraphics[scale=0.7]{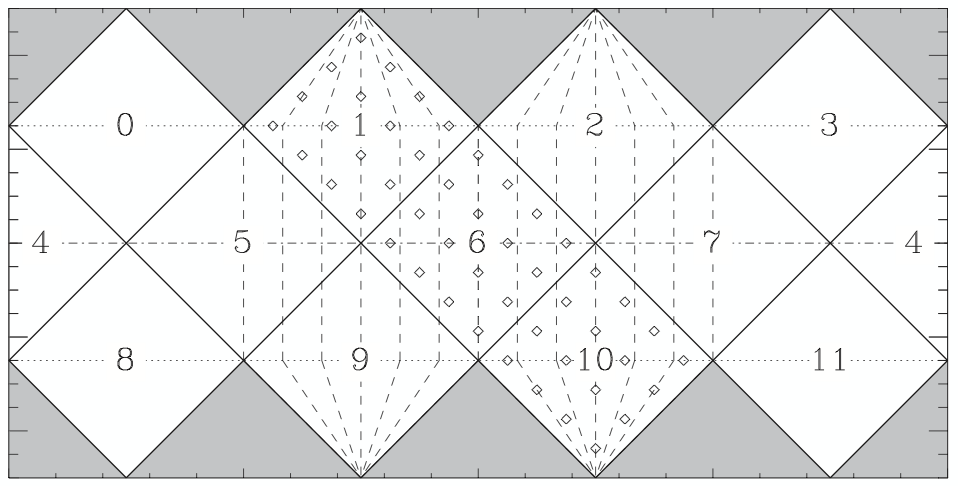}
\caption{\centering Projection of HEALPix lattice. \copyright AAS. Reproduced with kind permission from the authors of \cite{Gorski} and The Astrophysical Journal. See DOI: 10.1086/427976.}
\label{fig:projection}
\end{center}
\end{figure}

\begin{remark}[See \cite{Gorski}]
		Pixel boundaries are mapped under $\Gamma$ to straight line segments.
\end{remark}
Each pixel is mapped to a square in $R$, as the segments $\Gamma(m^e_{j,\ell})$  are parallel to each other and seperated by the same distance. The same is true for $\Gamma(p^e_{j,\ell})$, which are orthogonal to $\Gamma(m^e_{j,\ell})$. The same applies to  $\Gamma(m^n_{j,k,\ell})$ and $\Gamma(p^n_{j,k,\ell})$. The resulting lattice (see Fig. \ref{fig:projection}) of pixel centers is the $\mathbb{Z}^2$ lattice scaled and rotated.


We use the following fact, which follows from the definition of $\Gamma$, implicitly in further calculations.
\begin{remark}
		Boundaries of hemispheres going through the poles are mapped to simple polygons by $\Gamma$. For most other caps $C$, if $\Gamma(\partial C)$ intersects such polygons, it does so at most twice, hence restricting the geometry of $\Gamma(\partial C)$. 
\end{remark}

\subsection{Preservation of area up to scaling}
 It is shown in \cite{Gorski} that pixels of the same resolution $\ell$ have the same area. Let $B_1, \cdots, B_{12}$ denote the 12 base pixels on the sphere. Then $\Gamma(B_1), \Gamma(B_2), \cdots, \Gamma(B_{12})$ are squares in $R$ (if we think of $R$ as a cylinder), and they are all of equal area. The tiling in a base pixel with resolution parameter $\ell$ is mapped to a tiling in the associated square of the same resolution, and each of the subpixels at resolution $\ell$ has equal area. Any open subset $A$ of a base pixel $B_i$ can now be approximated from inside and outside via a cover of subpixels of ever increasing resolution; thus we obtain the following lemma.
\begin{lemma}
	Given a base pixel $B$ and an open set $A\subset B$, then
	$$ \frac{\mathrm{area}(A)}{\mathrm{area}(B)}=\frac{\mathrm{area}(\Gamma(A))}{\mathrm{area}(\Gamma(B))}.$$
\end{lemma}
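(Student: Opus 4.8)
\emph{Proof strategy.} The plan is to run the inside--out approximation indicated just before the statement, comparing spherical and planar areas resolution by resolution. Fix the base pixel $B$. For each $\ell$ I would let $\mathcal{S}_\ell(A)$ be the collection of subpixels of $\mathcal{T}_\ell$ contained in both $B$ and $A$, and set $U_\ell:=\bigcup_{S\in\mathcal{S}_\ell(A)}S$ and $n_\ell:=\#\mathcal{S}_\ell(A)$. By the equal-area property of $\mathcal{T}_\ell$ (from \cite{Gorski}) each such subpixel has spherical area $\mathrm{area}(B)\,4^{-\ell}$, and by the discussion above its image under $\Gamma$ is one of the congruent squares tiling $\Gamma(B)$, of area $\mathrm{area}(\Gamma(B))\,4^{-\ell}$. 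Since $\Gamma$ is injective on $B$ (which I would check directly from the three cases in the definition of $\Gamma$, the only failure being at a single point, a pole), $\Gamma(U_\ell)$ is, up to a null set, the union of $n_\ell$ distinct such squares, so
\[
\frac{\mathrm{area}(U_\ell)}{\mathrm{area}(B)} \;=\; \frac{n_\ell}{4^{\ell}} \;=\; \frac{\mathrm{area}(\Gamma(U_\ell))}{\mathrm{area}(\Gamma(B))} \qquad\text{for every } \ell .
\]

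The next step is to let $\ell\to\infty$ in this identity. The dyadic structure gives $U_\ell\subseteq U_{\ell+1}$, and since subpixel diameters tend to $0$ and $A$ is open, every point of $A$ outside the countable union $Z_0$ of all subpixel boundaries lies in $U_\ell$ for $\ell$ large; hence $U_\ell\nearrow A\setminus Z$ with $Z\subseteq Z_0$. Each subpixel boundary is a finite union of arcs of the curves $m_{j,\ell}^{e},p_{j,\ell}^{e},m_{j,k,\ell}^{n},p_{j,k,\ell}^{n}$, so it is spherically null; thus $\mathrm{area}(Z)=0$ and monotone convergence gives $\mathrm{area}(U_\ell)\to\mathrm{area}(A)$. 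On the plane side, injectivity of $\Gamma$ on $B$ lets me pass $\Gamma$ through the union, so $\Gamma(U_\ell)\nearrow\Gamma(A\setminus Z)=\Gamma(A)\setminus\Gamma(Z)$; by the remark that $\Gamma$ straightens pixel boundaries, $\Gamma(Z)\subseteq\Gamma(Z_0)$ is a countable union of line segments, hence planar-null. Therefore $\mathrm{area}(\Gamma(U_\ell))\to\mathrm{area}(\Gamma(A))$, and passing to the limit in the displayed identity proves the claim. (Equivalently, one may phrase the argument as showing that $\Gamma$ pushes $\sigma$ restricted to $B$ forward to a constant multiple of planar Lebesgue measure on $\Gamma(B)$, the two measures agreeing on the $\pi$-system of subpixels.)

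I expect the one genuinely subtle point to be the plane-side limit: it is not automatic that a spherically null set like $Z$ is carried by $\Gamma$ to a planar null set, and this is precisely where it is essential that $\Gamma$ maps pixel boundaries to line segments rather than to curves of positive area. The remaining ingredients — injectivity of $\Gamma$ on a base pixel (the second coordinate recovers $\cos\theta$, and for each fixed $\theta>0$ the first coordinate is affine and increasing in $\phi$ on the relevant quarter-period), the monotonicity $U_\ell\subseteq U_{\ell+1}$, and the two elementary area computations — are routine, so the write-up is mostly a matter of assembling these facts together with one application of monotone convergence.
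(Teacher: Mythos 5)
Your proposal is correct and follows essentially the same route as the paper, which likewise sketches an approximation of $A$ by subpixels of increasing resolution combined with the equal-area property of the tiling on both the sphere and the plane. Your write-up merely makes the sketch precise (one-sided inner approximation plus a null-set argument for the subpixel boundaries, in place of the paper's inner-and-outer squeeze), which is a fine and equivalent way to finish.
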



\section{Discrepancy bounds}

\label{sec:discrepancybounds}

In this section we prove the main theorem, using ideas about intersections of curves with lattice boxes that go back to Gauss (e.g. the Gauss circle problem). We leave the proofs of some necessarily lemmas to \textsection \ref{sec:calculations}.

\subsection{Proof of upper bound of Theorem \ref{thm:main}}
\label{discrepancybound}

The next proposition, whose proof is in \textsection \ref{sec:calculations}, bounds the length of projected spherical cap boundaries under $\Gamma$.

\begin{proposition}\label{prop:length_bound_cap}
	Let $w\in\s, t\in(-1,1)$ be chosen and let $\Gamma: \s \rightarrow R$ be as in \textsection \ref{sec:projection}. Set $\beta=\Gamma(\partial C(w,t))$, then 
	$$
	\mbox{length}(\beta)
	\leq 5+\sqrt{2}
	< 6.42.
	$$
\end{proposition}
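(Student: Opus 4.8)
The plan is to show that, away from a finite set of meridians where it jumps, the HEALPix projection $\Gamma$ is uniformly Lipschitz with constant $<\tfrac23$ — distances measured on $\s$ with the round metric and on $R\subset\R^2$ with the Euclidean metric — and to feed in the elementary fact that $\partial C(w,t)$ is the intersection of $\s$ with a plane at distance $|t|<1$ from the origin, hence a Euclidean circle of radius $\sqrt{1-t^2}\le1$ and length at most $2\pi$. Granting the Lipschitz bound, $\length(\beta)\le\tfrac23\cdot2\pi=\tfrac{4\pi}{3}<5+\sqrt2<6.42$, with room to spare.

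First I would record where $\Gamma$ is smooth. From the three branch formulas in \textsection\ref{subsec_Gamma}, $\Gamma$ is $C^1$ on each open zone $\{|\cos\theta|<\tfrac23\}$ and $\{\pm\cos\theta>\tfrac23\}$, and the branches agree continuously across $\cos\theta=\pm\tfrac23$; inside the polar caps the term $\phi\bmod\tfrac\pi2$ makes $\Gamma$ only piecewise smooth, so one subdivides each polar cap along the four meridians $\phi\in\{0,\tfrac\pi2,\pi,\tfrac{3\pi}2\}$ into ``columns.'' Now $\partial C$ is a circle on $\s$ and all of these separating sets lie on finitely many circles of $\s$, so $\partial C$ meets them in only finitely many points (and if $\partial C$ lies on one of the separating sets it is still cut by the remaining ones into finitely many pieces); cutting there produces arcs $A_1,\dots,A_m$, each contained in the closure of a single zone or column, on which the relevant branch of $\Gamma$ is continuous. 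Since $\Gamma$ is injective, $\length(\beta)=\mathcal{H}^1\big(\bigcup_i\Gamma(A_i)\big)\le\sum_i\length(\Gamma(A_i))$; the jumps of $\Gamma$ across the polar meridians are horizontal of size $<\tfrac12$ and contribute no length to the image set.

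The quantitative core is the bound $\|d\Gamma\|<\tfrac23$, which I would check zone by zone. On the equatorial zone $\Gamma=\gamma(\phi,\theta)\mapsto(\phi/\pi,\tfrac38\cos\theta)$, so in the orthonormal frame $\big(\tfrac1{\sin\theta}\partial_\phi,\partial_\theta\big)$ its differential is diagonal with entries $\tfrac1{\pi\sin\theta}$ and $\tfrac38\sin\theta$; since $\sin^2\theta\ge\tfrac59$ here, $\|d\Gamma\|\le\max\big(\tfrac{3}{\pi\sqrt5},\tfrac38\big)=\tfrac{3}{\pi\sqrt5}<\tfrac23$. On a north column I would substitute $r=\sqrt{3(1-\cos\theta)}\in[0,1]$, $q=\sqrt{1+\cos\theta}\in[\sqrt{5/3},\sqrt2]$, $\psi=\phi-k\tfrac\pi2\in[0,\tfrac\pi2]$, rewrite the branch as $\gamma(\phi,\theta)\mapsto\big(\tfrac k2+\tfrac14+\tfrac r\pi(\psi-\tfrac\pi4),\ \tfrac12-\tfrac r4\big)$, and use $\partial_\theta r=\tfrac{\sqrt3}{2}q$ together with $r/\sin\theta=\sqrt3/q$ to see that $d\Gamma$, in the same frame, is the upper-triangular matrix with first row $\big(\tfrac{\sqrt3}{\pi q},\ \tfrac{\sqrt3 q}{2\pi}(\psi-\tfrac\pi4)\big)$ and second row $\big(0,\ -\tfrac{\sqrt3 q}{8}\big)$. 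Its squared Frobenius norm is at most $\tfrac{9}{5\pi^2}+\tfrac3{16}$ (from $q^2\le2$, $q^2\ge\tfrac53$, $(\psi-\tfrac\pi4)^2\le\tfrac{\pi^2}{16}$), and $\tfrac9{5\pi^2}+\tfrac3{16}<\tfrac15+\tfrac3{16}=\tfrac{31}{80}<\tfrac49$ because $\pi^2>9$; hence $\|d\Gamma\|<\tfrac23$. The south columns follow from the reflection $z\mapsto-z$, under which $\Gamma$ becomes $(u,v)\mapsto(u,-v)$. Thus $\Gamma$ is $\tfrac23$-Lipschitz on each closed zone/column, $\length(\Gamma(A_i))\le\tfrac23\length(A_i)$, and summing, $\length(\beta)\le\tfrac23\sum_i\length(A_i)=\tfrac23\length(\partial C)\le\tfrac{4\pi}3$.

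The step I expect to cost the most work is the polar-column differential computation — getting $d\Gamma$ into the triangular form above and verifying $\tfrac9{5\pi^2}+\tfrac3{16}<\tfrac49$ — along with the care needed to justify that $\partial C$ decomposes into finitely many arcs on each of which $\Gamma$ is genuinely Lipschitz, and that ``$\length(\beta)$'' means the total length of those arcs. (If one instead closes the finitely many gaps along the polar meridians to regard $\beta$ as a connected curve, the closing segments are horizontal of length $<\tfrac12$; since $\partial C$ meets the two great circles carrying those meridians in at most four points there are at most four such segments, adding at most $2$, and $\tfrac{4\pi}3+2<5+\sqrt2$, so the conclusion is unaffected.)
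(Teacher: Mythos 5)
Your proof is correct, but it proceeds by a genuinely different route from the paper. The paper decomposes $\partial C$ at the distinguished points $p_{top},p_{bottom},p_{left},p_{right},p_{ins1},p_{ins2}$, establishes monotonicity of the projected arcs, and runs a ten-case analysis in which each piece is bounded by ``width $+$ height'' via the trapezoid lemma (Lemma~\ref{lem_trapezoidbound}); the worst cases (Ie, IIe) give exactly $5+\sqrt{2}$. You instead bound the differential of $\Gamma$ zone by zone and multiply by $\length(\partial C)\le 2\pi$. I checked your computations: in the equatorial belt $d\Gamma$ is diagonal in the frame $\bigl(\tfrac{1}{\sin\theta}\partial_\phi,\partial_\theta\bigr)$ with entries $\tfrac{1}{\pi\sin\theta}\le\tfrac{3}{\pi\sqrt5}$ and $\tfrac38\sin\theta\le\tfrac38$; in a polar column your substitution $r=\sqrt{3(1-\cos\theta)}$ correctly rewrites the branch as $\bigl(\tfrac k2+\tfrac14+\tfrac r\pi(\psi-\tfrac\pi4),\ \tfrac12-\tfrac r4\bigr)$, the triangular matrix is right, and $\tfrac{9}{5\pi^2}+\tfrac{3}{16}<\tfrac{31}{80}<\tfrac49$ holds, so $\|d\Gamma\|<\tfrac23$ everywhere off the cut meridians. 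The bookkeeping for the discontinuities is also sound: $\partial C$ crosses the polar meridian arcs at most four times, each jump is horizontal of size $\tfrac{1-r}{2}\le\tfrac12$, and either interpretation of $\length(\beta)$ (total length of the image arcs, or the closed-up curve) stays below $5+\sqrt2$ since $\tfrac{4\pi}{3}+2<5+\sqrt2$. What each approach buys: yours is shorter, avoids the case distinction entirely, and actually yields the sharper constant $\tfrac{4\pi}{3}\approx 4.19$, which would improve the leading coefficient in Theorem~\ref{thm:main} to $\tfrac{8\pi}{3\sqrt3}$; the paper's argument stays elementary (no curvature or differential computations, and the trapezoid lemma needs only rectifiability plus monotonicity), and its decomposition into monotone arcs is closer in spirit to the $n$-convexity machinery used afterward in Lemma~\ref{lem_sharplengthbound}. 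Note only that the length bound is not the whole story for Lemma~\ref{lemma:amount_intersections} --- one still needs the finite decomposition into convex/monotone pieces from Corollary~\ref{cor_capsconvex} to control the additive constant --- but that is outside the scope of this proposition.
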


The length bound translates directly to the error coefficient in the discrepancy, as we see below. Let $H_N$ be the HEALPix points, i.e., the centers for some tessellation with a total amount of $N$ pixels. 
\begin{lemma}\label{lemma:amount_intersections}
For any spherical cap $C=C(w, t)$ the boundary of $C$ intersects at most $\frac{2(5+\sqrt{2})}{\sqrt{3}} \sqrt{N} + 1000$ (sub)-pixels.
\end{lemma}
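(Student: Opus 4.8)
The plan is to relate the number of (sub)-pixels that $\partial C$ meets on the sphere to the number of projected squares that $\beta = \Gamma(\partial C)$ meets in the planar picture $R$. Since $\Gamma$ is a bijection from $\s_p$ to $R$ carrying each (sub)-pixel onto a square of a scaled-and-rotated copy of $\mathbb{Z}^2$, a pixel $P$ is hit by $\partial C$ exactly when the square $\Gamma(P)$ is hit by $\beta$ (modulo the measure-zero issue of the poles and the seam of $R$, which contribute only a bounded additive error). So the task reduces to: given a curve $\beta \subset R$ with $\mathrm{length}(\beta) \le 5 + \sqrt{2}$ by Proposition \ref{prop:length_bound_cap}, bound the number of squares of the HEALPix grid in $R$ that $\beta$ can cross.

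Next I would pin down the side length of those squares. There are $N = 12 \cdot 4^{\ell}$ (sub)-pixels in total, distributed among $12$ base squares $\Gamma(B_i)$, so each base square is subdivided into $4^{\ell}$ congruent smaller squares; hence each small square has side length $s = (\text{side of } \Gamma(B_i))/2^{\ell}$. From the explicit description of $\Gamma$ one reads off that $R = [0,2) \times (-\tfrac12, \tfrac12)$ has area $2$ and is tiled by $12$ congruent (tilted) squares, so each base square has area $1/6$ and side $1/\sqrt{6}$; therefore $s = \tfrac{1}{\sqrt{6}}\, 2^{-\ell}$, and using $2^{\ell} = \sqrt{N/12}$ we get $s = \tfrac{1}{\sqrt{6}} \sqrt{12/N} = \sqrt{2/N} = \tfrac{\sqrt{2}}{\sqrt{N}}$.

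Then I would invoke the standard Gauss-type counting estimate: a connected rectifiable curve of length $L$ in the plane meets at most roughly $\tfrac{L}{s} + O(1)$ cells of an axis-aligned (or rotated) grid of mesh $s$ — more precisely, each time the curve passes from one cell to a neighbour it must traverse a distance at least $s$ in one coordinate direction, and a more careful bookkeeping (counting crossings of the two families of grid lines, each family being a set of parallel lines spaced $s$ apart) gives at most $2L/s + C$ cells for an absolute constant $C$, where the factor $2$ comes from having two transverse families of lines; this is exactly the type of lemma flagged for \textsection \ref{sec:calculations}. Plugging in $L = 5 + \sqrt 2$ and $s = \sqrt{2}/\sqrt N$ yields $2L/s = 2(5+\sqrt2)\sqrt N/\sqrt 2 = \sqrt 2 (5+\sqrt 2)\sqrt N$. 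I would want to reorganize the counting so that the leading coefficient comes out as $\tfrac{2(5+\sqrt2)}{\sqrt 3}$ rather than $\sqrt2(5+\sqrt2)$; since $\sqrt2 = \tfrac{2}{\sqrt2} < \tfrac{2}{\sqrt3}$ is false, the intended bound must instead come from counting crossings of only the $2^{\ell+1}$-ish relevant grid lines within the $O(1)$-length footprint of $\beta$, i.e. bounding the number of pixels hit by (number of grid-line crossings) $+$ (number of connected pieces), and the geometry of $\Gamma(\partial C)$ — which by the Remark crosses the base-square boundaries at most twice — keeps the piece count $O(1)$.

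The main obstacle, and the place I would be most careful, is the additive $O(1)$ term and the precise constant. Three things feed into it: (i) the at-most-$12$ base squares $\Gamma(B_i)$ and the at-most-twice-each crossings of $\beta$ into them, which controls how the curve fragments when we pass from the sphere to the cylinder $R$ and across the HEALPix "diamond" boundaries; (ii) pixels touched only along the seam $\{0\} \sim \{2\}$ of $R$ or degenerate caps through the poles, handled separately as a bounded set; and (iii) turning the raw crossing count into a pixel count without double-counting. Getting all of these to fit under the stated $1000 \cdot N^{-1}$ (i.e. an additive $1000$ in pixel count) is routine but must be done honestly; I expect the clean way is to fix $\ell$ large (the small-$\ell$ cases being absorbed into the constant $1000$ since there are only finitely many pixels) and then bound, base square by base square, the number of sub-grid lines of mesh $s$ crossed by the portion of $\beta$ inside that base square by $\mathrm{length}(\beta \cap \Gamma(B_i))/s + 2$, summing the lengths to $\le 5+\sqrt2$ and the "$+2$" over at most $12$ base squares. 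Choosing which family of grid lines to count (the one giving $\sqrt 3$ rather than $\sqrt 2$ in the denominator comes from the HEALPix squares being tilted at $45^\circ$, so that a curve of horizontal/vertical extent behaves with mesh $s/\sqrt2$ in the tilted frame — this is the subtlety that produces $\tfrac{2}{\sqrt3}$ once one also accounts for the $2:1$ aspect of $R$) is the one genuinely delicate point, and I would double-check it against the claimed conjecture $I_{H_N} = 2^{3+\ell}$ as a sanity check on the leading constant.
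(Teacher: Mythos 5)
Your overall strategy (project via $\Gamma$, use the length bound of Proposition \ref{prop:length_bound_cap}, and count grid cells met by $\beta$) is the paper's strategy, but there are two concrete gaps that prevent the argument from reaching the stated constant. First, your mesh-size computation is wrong: the $12$ base squares do \emph{not} tile all of $R=[0,2)\times(-\tfrac12,\tfrac12)$. The image of $\Gamma$ has area $\tfrac32$, not $2$ (there are empty triangles adjacent to the top and bottom edges of $R$ in the polar regions); each base square is a diamond with horizontal and vertical diagonals of length $\tfrac12$, hence of side $\tfrac{1}{2\sqrt2}$ and area $\tfrac18$. The sub-pixel side is therefore $s=\tfrac{1}{2\sqrt2}\,2^{-\ell}=\sqrt{3/(2N)}$, not $\sqrt{2/N}$. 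Second, and more importantly, the ``standard Gauss-type'' count with leading factor $2L/s$ cannot produce the coefficient $\tfrac{2(5+\sqrt2)}{\sqrt3}$, and your attempts to recover it from the $45^\circ$ tilt or the $2{:}1$ aspect ratio of $R$ do not identify the real mechanism. (In fact, for a general rectifiable curve the linear-in-$L/s$ cell count with a small constant already requires an argument; a curve can touch $a$ vertical and $b$ horizontal grid lines and a priori meet up to $(a+1)(b+1)$ cells.) The paper's improvement to the factor $\sqrt2$ comes from the $n$-convexity of $\beta$: Corollary \ref{cor_capsconvex} (proved via the curvature computations in \textsection\ref{subsec:curvcalc}) decomposes $\beta$ into a bounded number of arcs that are monotone in both coordinates after rotating the grid to be axis-parallel. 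For a monotone arc the number of cells met is $(c+d)K+O(1)$, where $c,d$ are its coordinate extents, and the chord inequality $c+d\le\sqrt2\sqrt{c^2+d^2}\le\sqrt2\cdot\mathrm{length}$ gives Lemma \ref{lem_sharplengthbound}. With the correct mesh one then gets $\sqrt2\,(5+\sqrt2)/s=\frac{2(5+\sqrt2)}{\sqrt3}\sqrt N$; the $\sqrt3$ is nothing more exotic than $2^{\ell}=\sqrt{N/12}$ combined with the correct base-square size.

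A secondary issue is the additive constant. In the paper it is controlled by the number of monotone pieces, i.e.\ by the bound $n\le 36$ on the convexity index of $\beta$ coming from counting zeros of curvature and self-intersections (yielding $19n+1-m\le 685<1000$ in Lemma \ref{lem_sharplengthbound}). Your plan to charge ``$+2$ per base square'' implicitly assumes $\beta$ is monotone inside each base square, which need not hold (the curve can turn around within a base square, e.g.\ near $p^\Gamma_{left}$ or $p^\Gamma_{right}$); without the curvature analysis you have no bound on the number of monotone pieces and hence no bound on the additive term.
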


The proof of Lemma \ref{lemma:amount_intersections} is also left to \textsection \ref{sec:calculations}. We now show the upper bound of Theorem \ref{thm:main}:
\[
D(H_N) \leq \frac{2(5+\sqrt{2})}{\sqrt{3}} \frac{1}{\sqrt{N}} + \frac{1000}{N}
.\]

\begin{proof}
Let $C$ be an arbitrary spherical cap. 
We fix some ordering of the HEALPix points, $H_N=\{p_1, \dots, p_N\}$.
To each (mid) point $p_i$ there corresponds a pixel which we denote by $P_i$. 
For neighbouring pixels we assign the boundary between them to exactly one pixel. 
In this way we have that pixels are disjoint, while still satisfying $\sigma(P_i)=\sigma(P_j)$ for any $1 \leq i,j \leq N$.

By disjointness $\sigma(C) = \sum_{i=1}^N\sigma(C \cap P_i)$ and consequently
\begin{align*}
\left\vert \frac{1}{N}\sum_{i=1}^{N} \mathds{1}_{C}(p_i) - \sigma(C) \right\vert
&=
\left\vert\sum_{i=1}^{N}\left(  \frac{1}{N}\mathds{1}_{C}(p_i) - \sigma(C\cap P_i)\right)  \right\vert \\  
& \leq
 \sum_{i=1}^{N}\left\vert \frac{1}{N}  \mathds{1}_{C}(p_i) - \sigma(C\cap P_i)  \right\vert.
\end{align*}
If either $C \cap P_i = P_i$, or $C \cap P_i = \varnothing$, then $\vert \frac{1}{N} \mathds{1}_{C}(p_i) - \sigma(C\cap P_i) \vert =0$, since HEALPix provides an equal area partition.
Otherwise, i.e., when $\partial C$ intersects $P_i$, one has $\vert \frac{1}{N}\mathds{1}_{C}(p_i) - \sigma(C\cap P_i) \vert \leq \frac{1}{N}$. 
So by Lemma \ref{lemma:amount_intersections} we can bound the local discrepancy as follows
\begin{align*}
\sum_{i=1}^{N}\left\vert  \frac{1}{N} \mathds{1}_{C}(p_i) - \sigma(C\cap P_i)  \right\vert
\leq
\frac{2(5+\sqrt{2})}{\sqrt{3}} \frac{1}{\sqrt{N}} + \frac{1000}{N}. 
\end{align*}
Since this estimate holds for arbitrary spherical caps the result follows.  
\end{proof}

\subsection{Proof of lower bound of Theorem \ref{thm:main}}

The lower bound for $\discrep(H_N)$ is easily achieved by considering the specific spherical cap $C$ that is centered at the north pole $p_n$ and has height $t =- \frac{1}{N}$: that is, it covers slightly more than half of the surface area of the sphere. 
$\lvert H_N \rvert = N=12\cdot 4^{\ell}$, so in each of the 4 base pixels on the equatorial belt, there are $4^{\ell}$ subpixels. Hence there are $2^{\ell}$ subpixels on the ``diagonal'' of each base pixel, and their centers are on the equator. Multiplying by 4, we obtain that the equator contains $\sqrt{\frac{4N}{3}}$ points in $H_N$. Thus, the local cap discrepancy satisfies
$$\discrep_{C\big(p_n,-\tfrac{1}{N}\big)}(H_N) =\frac{2}{\sqrt{3}}N^{-1/2}+O(N^{-1}).$$


\subsection{Remark on jittered sampling in HEALPix pixels} We remark that if the point set $H_N$ of centers of HEALPix pixels is replaced by a point set $J_N$ obtained by choosing a point at random from each pixel, as in jittered sampling methods, we obtain the following upper bound matching Beck's:
\[\discrep(J_N) \le CN^{-3/4}\sqrt{\log N}.\]
This can be seen roughly as follows. Replace the class $\mathcal{C}$ of all spherical caps $C(w,t)$ with those who have rational $w$ and $t$: call this class $\mathcal{C}_{\mathbb{Q}}$. The discrepancy with respect to $\mathcal{C}_{\mathbb{Q}}$ is the same as the discrepancy with respect to $\mathcal{C}$, but has finite VC dimension (see \cite[Proposition 8]{BrauchartAistleitnerDick}). 

We can now apply the jittered sampling upper bound for discs given in \cite[Theorem 3.1]{Matousek} --- which was originally shown by Beck in \cite{BC87} --- to the class $\mathcal{C}_{\mathbb{Q}}$ instead. All steps of the proof translate. As we have argued in Lemma \ref{lemma:amount_intersections}, for a spherical cap $C(w,t) \in \mathcal{C}_{\mathbb{Q}}$, the curve $\Gamma(\partial C(w,t))$ intersects at most $O(\sqrt{N})$ pixels; this gives the analogue of \cite[Lemma 3.2]{Matousek}. The analogue of \cite[Lemma 3.3]{Matousek} is obtained from the applying the Sauer-Shelah lemma (see \cite{Sauer}, \cite{Shelah}) to $\mathcal{C}_{\mathbb{Q}}$. As this class has finite VC-dimension, we only need to consider polynomially many elements (in $N$) of $\mathcal{C}_{\mathbb{Q}}$. Thus we obtain discrepancy on the order of $N^{-3/4}\sqrt{\log N}$ with respect to the class $\mathcal{C}_{\mathbb{Q}}$, and hence also with respect to $\mathcal{C}$.

\section{Extended calculations}

\label{sec:calculations}

In \textsection \ref{subsec:convexcurve} we will introduce the notion of an $n$-convex curve and prove an upper bound on how many fundamental domains are intersected by such curves, and prove Lemma \ref{lemma:amount_intersections} up to the constant, which is given by Proposition \ref{prop:length_bound_cap}. We also show in Corollary \ref{cor_capsconvex} that it is enough to regard zeros of curvature to show that the boundaries of spherical caps are $n$-convex. In \textsection \ref{subsec:prop1proof} we prove Proposition \ref{prop:length_bound_cap} by case distinction. This will then give the boundary contribution in the proof of Theorem \ref{thm:main}. Finally, in \textsection\ref{subsec:curvcalc} we show that the boundaries of spherical caps are $n$-convex by studying their curvature.

\subsection{Intersection of convex curves with fundamental domains}
\label{subsec:convexcurve}

Most of the contents of this section also appear verbatim in a work of the first-named author \cite{Fer}. Its original purpose was to prove Lemma \ref{lemma:amount_intersections}, but in a different work he extended and streamlined the result. It is reproduced here for reviewing purposes.

	\begin{definition}\label{def_nconvex}
	A continuous curve $\beta:[a,b]\ra \mathbb{R}^2$ ($a,b\in\mathbb{R}$, $a<b$) is $n$-convex for $n\in\mathbb{N}$, if there are points $a=t_0<t_1<\ldots<t_{n-1}<t_{n}=b$ with the property that  $\beta\big([t_j,t_{j+1}]\big)$ is a convex curve  for each $0\leq j< n$, i.e. there exist convex sets $A_1,\ldots,A_n$ with $\beta\big([t_j,t_{j+1}]\big)\subset \partial A_j$.

\end{definition}
A circle is $1$-convex, as is a straight line segment. 
We will give a characterization of $n$-convex regular $\Ctwo$-curves in Lemma \ref{lem_constructingNconvexPartition}   in terms of sub-spirals, which are defined next.
\begin{definition}
	A sub-spiral $S$ is a $\Ctwo$-curve with compact domain, no self-intersections, and such that the curvature exists, is bounded, and does not change sign.
\end{definition}
A spiral is a sub-spiral, but a circle is not, as it intersects itself at one point. Recall that a curve $\beta$ is called regular if $\|\dot{\beta}(t)\|>0$ everywhere on its domain, where the derivative at boundary points is defined via one-sided limits.
\begin{lemma}\label{lem_constructingNconvexPartition}
	A  regular $\Ctwo$-curve $\beta$ is $n$-convex if and only if it  consists of finitely many sub-spirals. 
\end{lemma}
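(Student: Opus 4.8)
The plan is to derive both directions from one local lemma: \emph{if a regular $\Ctwo$-curve $\beta:[u,v]\ra\R^2$ has signed curvature of constant sign (equivalently, a monotone tangent-angle function $\varphi$) and satisfies $|\varphi(v)-\varphi(u)|<\pi$, then $\beta$ is simple and convex.} To prove this, rotate the plane so that $\varphi$ takes values in $(-\tfrac{\pi}{2},\tfrac{\pi}{2})$; the first coordinate of $\beta$ then has derivative $\|\dot\beta\|\cos\varphi>0$, hence is strictly increasing. This already forces injectivity, and it exhibits $\beta$ as the graph of a $\Ctwo$ function $f$ with $f'=\tan\varphi$ nondecreasing; thus $f$ is convex and $\beta$ lies in the boundary of its (convex) epigraph.

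Granting the local lemma, the implication ``finitely many sub-spirals $\Rightarrow$ $n$-convex'' is quick. A sub-spiral has compact domain and continuous, constant-sign curvature, so its $\varphi$ is monotone and bounded, of finite variation; by the intermediate value theorem we cut its domain into finitely many subintervals on each of which $\varphi$ varies by less than $\pi$, and the corresponding pieces are convex curves by the local lemma. Concatenating these finite subdivisions over the sub-spirals comprising $\beta$ displays $\beta$ as $n$-convex for a suitable $n$.

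For the converse, let $a=t_0<\dots<t_n=b$ and convex sets $A_j$ with $\beta([t_j,t_{j+1}])\subseteq\partial A_j$ witness that $\beta$ is $n$-convex. The crucial claim is that the signed curvature of $\beta$ does not change sign on any $[t_j,t_{j+1}]$. Granting it, $\varphi$ is monotone there, so as above we subdivide into finitely many pieces with $\varphi$-variation $<\pi$; each such piece is simple by the local lemma, and it is a $\Ctwo$ curve with compact domain and bounded constant-sign curvature --- i.e.\ a sub-spiral. Hence $\beta$ is a concatenation of finitely many sub-spirals.

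The crucial claim is the only real obstacle; everything else is bookkeeping. One may take $A_j$ closed, since $\partial A_j=\partial\overline{A_j}$ for convex sets, and fix an interior parameter $s\in(t_j,t_{j+1})$ --- continuity of the curvature then yields the claim on all of $[t_j,t_{j+1}]$. Let $\ell$ be a supporting line of $A_j$ at $\beta(s)$. A first-order Taylor expansion of $\beta$ at $s$, using that $\beta(s')\in A_j$ for $s'$ on both sides of $s$, forces $\dot\beta(s)\parallel\ell$, so $\ell$ is the tangent line of $\beta$ at $s$ and the unit normal $\nu(s)$ toward the side of $\ell$ containing $A_j$ is one of $\pm J\dot\beta(s)/\|\dot\beta(s)\|$ (with $J$ the rotation by $\tfrac{\pi}{2}$); by continuity and connectedness the choice of sign is the same for all $s\in[t_j,t_{j+1}]$. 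A second-order expansion, again using $\beta(s')\in A_j$, then shows the signed curvature with respect to $\nu$ is $\geq0$ at every such $s$. The degenerate possibilities are handled directly: $A_j$ cannot be a point (that would make $\beta$ constant, contradicting regularity), and if $A_j$ lies in a line then $\beta$ is a strictly monotone, hence simple, segment with vanishing curvature, which is already a sub-spiral.
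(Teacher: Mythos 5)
Your proof is correct, but it takes a genuinely different route from the paper's. For the direction ``finitely many sub-spirals $\Rightarrow$ $n$-convex,'' the paper runs a geometric algorithm (Loops 1 and 2): it shoots the backward tangent half-line from an endpoint, cuts the sub-spiral at the first return point, and iterates, arguing via the constant sign of curvature and the absence of self-intersections that each resulting arc coils around the previous cut points and is therefore convex; termination follows from finite length. You instead reduce everything to the tangent-angle function $\varphi$: constant-sign curvature makes $\varphi$ monotone of finite variation, so finitely many cuts with turning less than $\pi$ suffice, and each such piece is a graph of a convex (or concave) function after a rotation, hence simple and contained in the boundary of its epigraph (or hypograph). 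Your local lemma is cleaner to verify and does not even need the no-self-intersection hypothesis of a sub-spiral, whereas the paper's algorithm is constructive in a way that matters downstream: Corollary \ref{cor_capsconvex} obtains the explicit value $36$ by feeding the curvature zeros and self-intersections into that algorithm, and the resulting $n$ enters the constant $19n$ in Lemma \ref{lem_sharplengthbound}; your turning-angle decomposition would give a finite but generally different (and possibly larger) $n$. For the converse direction the roles are reversed: the paper asserts in one line that the arcs between consecutive $t_j$'s are ``clearly'' sub-spirals, while you actually prove the non-obvious part --- that an arc lying in the boundary of a convex set has one-signed curvature --- via supporting lines, the first-order expansion forcing tangency, a constant choice of inward normal by continuity, and the second-order expansion giving the sign; you also correctly isolate the degenerate cases where $A_j$ is a point or a segment. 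The only cosmetic imprecision is that when $\varphi$ is decreasing the graph function is concave rather than convex, so one should take the hypograph instead of the epigraph; the conclusion is unaffected.
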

\begin{proof} 	If $\beta$ is $n$-convex, take arbitrary $c_0,\ldots,c_{n}$ with $t_{j}<c_{j}<t_{j+1}$ (the $t_j$ as in Definition \ref{def_nconvex}), then clearly each $\beta\big([t_j,c_{j}]\big)$ and $\beta\big([c_j,t_{j+1}]\big)$ is a sub-spiral.
	
To show the other implication, let us first assume $\beta:[a,b]\ra \mathbb{R}^2$ is a sub-spiral itself and we define for each $\tau\in[a,b]$
	$$\dot{\beta}^-(\tau)=\lim_{\epsilon\ra0^+}\frac{\beta(\tau)-\beta(\tau+\epsilon)}{\epsilon}.$$
	 By regularity we have $\|\dot{\beta}^-(\tau)\|>0$, and we define the half-line
	$$L(\tau)=\big\{T(\rho,\beta(\tau))\  \mid \ \rho\geq 0 \big\}\hspace{0.3cm}\mbox{where}\hspace{0.3cm}T(\rho,\beta(\tau))=\beta(\tau)+\rho\dot{\beta}^-(\tau).$$
	We will now introduce an algorithm to find all necessary $t_j$'s:
	
	\vspace{0.2cm}
	
	\textbf{Loop 1.} 	If $L(a)$ doesn't intersect $\beta$, proceed to Loop 2. Otherwise let 
	$$\rho_1=\min_\rho\{T(\rho,\beta(a))\cap\beta\big([a,b]\big)\neq\emptyset\}.$$
	If $T(\rho_1,\beta(a))=\beta(b)$, proceed to Loop 2. Otherwise define $t_1$ so that $\beta(t_1)=T(\rho_1,\beta(a))$. If 
	$L(t_1)$ doesn't intersect $\beta$, proceed to Loop 2. Else set  		$$\rho_2=\min_\rho\{T(\rho,\beta(t_1))\cap\beta\big([a,b]\big)\neq\emptyset\},$$
	and $t_2$ so that $\beta(t_2)=T(\rho_2,\beta(t_1))$. By construction, 
	 $\beta([t_1,t_2])$ does not intersect the line segment $T\big([0,\rho_1],\beta(a)\big)$, and thus it coils around $\beta(a)$.
	If $\beta(t_2)=\beta(b)$, proceed to Loop 2. Otherwise regard $L(t_2)$ and define $\rho_3,t_3$ similarly or proceed to Loop 2. It then follows from constant sign of curvature, no self-intersection and choices of $\rho_s$ that $\beta([t_2,t_3])$ coils around $\beta(a)$ and $\beta(t_1)$. Continuing in this fashion we see that this process has to terminate since $\beta$ has finite length, and we proceed to Loop 2.
	
	\textbf{Loop 2.} Let $\{t_1,\ldots,t_k\}$ be the values generated in Loop 1 (potentially empty). We now reverse the parametrization: $\beta^{\leftarrow}:[a,b]\ra\mathbb{R}^2$ with $\beta^{\leftarrow}(\tau)=\beta(a+b-\tau)$, and proceed as in Loop 1  for the curve $\beta^{\leftarrow}$, where we replace the command "proceed to Loop 2" by ``terminate algorithm". 
		
	
	This way potentially further points are generated, say $\{t_{k+1},\ldots,t_{k+m}\}$, where $t_{k+m}=b$, and we see that $\beta$ is $(k+m)$-convex.
	If $\beta$ is the concatenation of sub-spirals $S_k$, then we apply the algorithm above to each $S_k$, where we collect all the  values $t_j$ in one list and  extend it by  the points of separation between $S_k$ and $S_{k+1}$ if necessary.
\end{proof}
To see if a $\Ctwo$-curve is $n$-convex it is hence enough to find the points of self-intersection and zeros of curvature where the sign changes -- the arcs between these points then are sub-spirals, and the algorithm above will find $n$.

\begin{corollary}\label{cor_capsconvex}
	The images $\Gamma\big(\partial C(w,t)\big)$ for all $(w,t)\in\s\times(-1,1)$ consist of finitely many pieces which are at most $36$-convex (and the number of pieces is bounded for all $C(w,t)$).
\end{corollary}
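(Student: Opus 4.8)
The plan is to invoke Lemma \ref{lem_constructingNconvexPartition}: to bound the number of convex pieces of $\Gamma(\partial C(w,t))$ it suffices to bound the number of sub-spiral arcs it breaks into, which in turn is controlled by the number of self-intersections of $\Gamma(\partial C(w,t))$ plus the number of sign changes of its curvature plus the number of places where the curve crosses from one of the three latitudinal regions (north cap, equatorial belt, south cap) into another, since $\Gamma$ is defined piecewise and is only $\Ctwo$ within each region. First I would reduce to the generic case: hemispheres through both poles map to simple polygons (handled by the earlier remark), and caps centered at a pole map to horizontal segments or to a single point, so these are trivially $O(1)$-convex; the north-pole-in-$\partial C$ and south-pole-in-$\partial C$ borderline cases are limits of the generic parametrizations \eqref{eq_northpoleparametrization}, \eqref{eq_southpoleparametrization} and contribute no extra complexity. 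For the remaining caps I would use parametrization \eqref{parametrization1} (by the remark in \textsection\ref{section:curvature} the other behaves identically), with $\theta$ ranging over the single interval $[\,|\arccos t - \theta_w|,\ \arccos t + \theta_w\,]$.

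Next I would count crossings of region boundaries. The boundary $\partial C$ is a single circle on $\s$; the two horizontal circles $\cos\theta = \pm 2/3$ separating the regions each meet $\partial C$ in at most two points, so $\Gamma(\partial C)$ is split into at most $5$ arcs, each lying entirely in one region where $\Gamma$ is smooth. Then I would count self-intersections: within the equatorial region $\Gamma$ is affine in $(\phi,\cos\theta)$ up to the $2\pi$-periodicity in $\phi$, so the projected arc is (a piece of) the graph of $\cos\theta$ against $\phi/\pi$ wrapped on a cylinder of circumference $2$; since $\theta$ is monotone on each of the at most two $\phi$-monotone branches of the spherical circle, each such arc self-intersects $O(1)$ times, and similarly in the polar regions the map $\Gamma$ composed with the cap parametrization is injective except for the $\pi/2$-periodic ``folding'' of the $(\phi \bmod \pi/2)$ term, again giving $O(1)$ crossings per base-pixel column the arc passes through — and a cap boundary passes through only $O(1)$ columns because $\phi - \phi_w$ ranges over an interval of length at most $\pi$ (from the $\arccos$), i.e. at most three of the four columns. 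Finally, the zeros of curvature: by Corollary \ref{cor_capsconvex}'s own hypothesis one only needs sign changes of curvature, and the curvature of each smooth arc of $\Gamma(\partial C)$ is a ratio whose numerator, after clearing denominators, is an explicit elementary function of $\theta$ (computed in \textsection\ref{subsec:curvcalc}); I would argue that in each region this numerator has a bounded number of sign changes, uniformly in $(w,t)$.

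The main obstacle is the last step — bounding the zeros of curvature of the projected arcs uniformly over all $(w,t)$. The projection is nonlinear in the polar caps (the square-root and $\bmod$ terms), so $\Gamma(\partial C)$ there is not a conic and its curvature is a genuinely transcendental expression; one must show its numerator, as a function of the single parameter $\theta$, changes sign only boundedly often regardless of $\phi_w,\theta_w,t$. I would handle this by deferring the explicit curvature formula to \textsection\ref{subsec:curvcalc}, writing the numerator as a polynomial in $\cos\theta, \sin\theta$ and a bounded number of radical terms $\sqrt{1-\cos\theta}$, and bounding its zeros via Descartes/Rolle-type arguments after a substitution $u = \sqrt{1-\cos\theta}$ that rationalizes it. Combining: at most $5$ region-arcs, each contributing $O(1)$ self-intersection points and $O(1)$ curvature sign changes, gives a bounded total; tallying the worst cases and running the algorithm of Lemma \ref{lem_constructingNconvexPartition} yields the explicit bound of $36$ convex pieces, with the count uniform in $C(w,t)$ as claimed.
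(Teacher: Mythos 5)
Your proposal follows essentially the same route as the paper: reduce to Lemma \ref{lem_constructingNconvexPartition}'s sub-spiral characterization, handle the degenerate polygonal images (polar-centered caps and hemispheres through the poles) separately, and bound the sign changes of curvature region by region via the polynomial reduction carried out in \textsection\ref{subsec:curvcalc}. The only piece worth stating explicitly is the final tally that produces $36$: at most $8$ curvature zeros in the equatorial belt and at most $5$ in each polar cap give $18$ for one parametrization, which is then doubled.
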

\begin{proof}
	This follows from Lemma \ref{lem_constructingNconvexPartition}, the definition of $\Gamma$, and the curvature calculations in \textsection \ref{subsec:curvcalc}. (Subspirals of $\Gamma(\partial C)$ are 1-convex curves themselves).
	Note that spherical caps with centers at the poles and spherical caps with centers on the equatorial belt and $t = 0$ are mapped to polygons under $\Gamma$, and in this case there are finitely many distinct curves which are $3$-convex.
\end{proof}

\begin{definition}
	Let $Q\mathbb{Z}^2$ be a lattice, where $Q$ is an invertible matrix; $\beta:[a,b]\rightarrow\mathbb{R}^2$ a continuous curve; and $K\in\mathbb{N}$. We further define $\Omega^Q_v=Q[0,1)^2+v$ for  some arbitrary but fixed $v\in\mathbb{R}^2$. The intersection number of $\beta$ with the tiling induced by  $\frac{1}{K}\big(Q\mathbb{Z}^2+ \Omega^Q_v\big)$ is 
	$$I_{\beta}^Q(K)=\#\big\{p\in Q\mathbb{Z}^2 \mid \big(\tfrac{1}{K}\Omega^Q_v+\tfrac{1}{K}p\big)\cap\beta\big([a,b]\big)\neq\emptyset \big\}.$$

\end{definition}
Clearly $I_\beta^Q(K)$ depends on the choice of $v$, but we will give a bound of it which does not.
In the next lemma we assume $\beta$ to have finitely many self-intersections to avoid parametrizations that ``walk" the same path a multitude of times.

\begin{lemma}\label{lem_sharplengthbound}
	Let $\beta$ be a piece-wise $\Cone$-curve in $\R^2$,  $n$-convex with $m$-many self-intersections for $n,m\in\N_0$. Let $\Lambda^Q$ be a full rank  lattice, then 
	$$I_\beta^{Q}(K)\leq \sqrt{2}\cdot K\cdot \mathrm{length}\big(Q^{-1}\beta\big)  +19n-m+1. $$
\end{lemma}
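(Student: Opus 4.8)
The plan is to reduce everything to the case of a single convex arc with no self-intersections, bound the intersection number there in terms of length, and then glue. First I would apply the linear change of coordinates $x \mapsto Q^{-1}x$, under which the lattice $Q\mathbb{Z}^2$ becomes $\mathbb{Z}^2$, the tile $\Omega^Q_v$ becomes the unit square $[0,1)^2 + Q^{-1}v$, and the curve $\beta$ becomes $Q^{-1}\beta$; this transformation does not change which lattice cells are hit, so $I_\beta^Q(K) = I_{Q^{-1}\beta}^{I}(K)$ and we may assume $Q = I$ at the cost of replacing $\mathrm{length}(\beta)$ by $\mathrm{length}(Q^{-1}\beta)$, exactly as in the claimed bound. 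Next, using $n$-convexity and the $m$ self-intersection points, I would cut the parameter interval $[a,b]$ at the $n-1$ break points of Definition \ref{def_nconvex} together with the (at most $m$) self-intersection parameters, obtaining at most $n + m$ closed subcurves, each of which is a convex arc (contained in the boundary $\partial A_j$ of a convex set) and, after possibly one further subdivision, without self-intersections. The total number of cells hit by $\beta$ is at most the sum of the numbers hit by the pieces, so it suffices to bound $I$ for a single convex, non-self-intersecting $\Cone$-arc of length $\ell_j$ and then sum.

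For a single convex arc $c$ of length $\ell_j$, the key geometric input is that as $c$ is traversed, its tangent direction is monotone (this is what convexity of the arc gives), so the arc can meet any fixed line in at most two points; in particular it crosses each vertical grid line $\{x = k/K\}$ at most twice and each horizontal grid line $\{y = k/K\}$ at most twice. A convex arc of length $\ell_j$ is contained in a disc of radius $\ell_j$, hence its $x$-extent and $y$-extent are each at most $\ell_j$, so it can cross at most $K\ell_j + 1$ vertical lines and at most $K\ell_j + 1$ horizontal lines, each at most twice; the arc enters a new cell only when it crosses such a grid line, so the number of cells it meets is at most the number of such crossings plus one, i.e. at most $2(K\ell_j + 1) + 2(K\ell_j + 1) + 1$. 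Refining this with the $\sqrt{2}$ factor: crossing a grid line in direction closest to the arc's current tangent is the efficient one, and a more careful accounting (projecting the arc onto the two coordinate axes, noting that the number of grid lines of one family crossed is at most $K$ times the total variation of that coordinate, and that the two total variations sum to at most $\sqrt{2}\,\ell_j$ by Cauchy--Schwarz applied pointwise to $|\dot c_1| + |\dot c_2| \le \sqrt{2}\|\dot c\|$) yields that a single convex non-self-intersecting arc meets at most $\sqrt{2}\,K\ell_j + O(1)$ cells, with an explicit additive constant that I would pin down so that the final bookkeeping gives the stated $19n - m + 1$.

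Finally I would sum over the at most $n+m$ pieces: $\sum_j \ell_j = \mathrm{length}(Q^{-1}\beta)$, so the leading term adds up to exactly $\sqrt{2}\,K\,\mathrm{length}(Q^{-1}\beta)$, and the additive constants accumulate to something of the form $cn + c'm + 1$. One then has to check that the self-intersections were only used to \emph{split} the curve (each one creating at most one extra piece but also being counted with a favorable sign), so that the $m$-dependence comes out as $-m$ rather than $+m$; heuristically, cutting at a self-intersection point does not create a genuinely new cell there because that cell was already counted by the other branch through it, which is the source of the $-m$ in the statement. The main obstacle, and the part requiring genuine care, is getting the additive constant down to precisely $19n - m + 1$ and verifying the sign of the $m$-term: the worst case is a convex arc that runs nearly parallel to a grid line for a long stretch (so it threads many cells while crossing few grid lines), and one must argue that a \emph{convex} arc cannot do this for two different grid directions simultaneously, pinning the loss per piece to a small explicit constant. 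The $\sqrt{2}$ factor itself is not hard — it is just the bound $\|v\|_1 \le \sqrt{2}\|v\|_2$ on the tangent vector — but distributing the $+1$'s and the self-intersection correction across the pieces so that nothing is double-counted is the delicate combinatorial step.
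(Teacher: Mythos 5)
Your proposal is correct and follows essentially the same route as the paper: reduce to $Q=\fatone$ by the linear change of variables, split $\beta$ into its convex pieces, bound the cell count of each piece by counting grid-line crossings of the two coordinate projections, and extract the factor $\sqrt{2}$ from $|\dot{c}_1|+|\dot{c}_2|\le\sqrt{2}\,\|\dot{c}\|$ (the paper's $c+d\le\sqrt{2}\sqrt{c^2+d^2}$). The ``more careful accounting'' you defer is precisely what the paper carries out: each convex piece is cut at the (at most four) parameters extremizing the two coordinates into at most five sub-arcs on which both coordinates are monotone, each such sub-arc is bounded by its two axis-parallel supporting segments, and this bookkeeping (together with the double-counting at self-intersections that you also invoke for the $-m$) is the source of the additive constant $19n-m+1$.
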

\begin{proof} 
	We can assume $v=0$ in the previous definition, by taking $\gamma=\beta -v$ if necessary. Thus without loss of generality, let $\beta:[0,1]\ra\R^2$ be a parametrization with finitely many self-intersections and $v=0$.   We can cut $\beta$ into sub-curves if necessary, so we further  assume $\beta$ to be $\Cone$. 	The $x,y$-coordinates of $\beta$ are denoted by $x(t)=\langle \beta(t),e_1\rangle$ and $y(t)=\langle \beta(t),e_2\rangle$. 
	
	\vspace{0.3cm}
	
	We will first show monotonicity of the coordinates for $\beta$ in certain intervals. 
	Let  $t_1,\ldots,t_n$ be as in Definition \ref{def_nconvex}, and $M_j,m_j\in \beta\big([0,t_1]\big)$  satisfy
	$$\langle M_j,e_j\rangle=\max_{t\in [0,t_1]}\langle \beta(t),e_j\rangle
	\hspace{0.2cm} \mbox{ and }\hspace{0.2cm}
	\langle m_j,e_j\rangle=\min_{t\in [0,t_1]}\langle \beta(t),e_j\rangle.$$ 
	Choose  $o_j^M\in\beta^{-1}(M_j)\cap[0,t_1]$ and $o_j^m\in\beta^{-1}(m_j)\cap[0,t_1]$ for $j\in\{1,2\}$. Let $o_1,o_2,o_3,o_4$ denote these $o_j^m,o_j^M$ in ascending order. In the calculations below the pair $(\tau_1,\tau_2)$ is $(o_k,o_{k+1})$ for some $k\in\{0,1,2,3,4\}$ with $\tau_2-\tau_1>0$, where $o_0=0$ and $o_5=t_1$.

	It follows that the $x,y$-coordinates of $\beta(t)$ are monotone for $t\in[\tau_1,\tau_2]$  by the convexity assumption:  for any $\kappa\in[0,t_1]$ and $\tau\in\{s\in[0,t_1]\ |\ \beta(s)\neq\beta(\kappa)\}$ we either have that the arc segment of $\beta$ with domain between $\tau$ and $\kappa$ is a straight line, or 
	$$\big\{ (1-s)\beta(\kappa)+s\beta(\tau)\ | \ 0<s<1 \big\}\cap \beta\big([0,t_1]\big)=\emptyset.$$
	 To prove monotonicity,  we work out the example  $\beta(\tau_1)=m_2$.
	 The existence of numbers $\tau_1<\epsilon_1<\epsilon_2<\tau_2$ such that $y(\epsilon_1)>y(\epsilon_2)$ will lead to a contradiction in this case (by 
	 the assumption on  $\beta(\tau_1)$, $y(t)$ is monotonously increasing): either  $x(\epsilon_1)\leq x(\epsilon_2)$ or $x(\epsilon_2)< x(\epsilon_1)$, in both cases the polygon with vertices and edges of the form  
	 $$\beta(o^M_2)\longrightarrow\beta(\tau_1)\longrightarrow\beta(\epsilon_1)\longrightarrow\beta(\epsilon_2) $$
	 contradicts convexity (we have $y(o^M_2)\geq y(\epsilon_1)>y(\epsilon_2)\geq y(\tau_1)=y(o^m_2)$).
	 Thus  $y(t)$ is monotonously increasing, and we use this fact to show that $x$ is monotone. Assume there are numbers $\tau_1<\epsilon_1<\epsilon_2<\epsilon_3<\tau_2$ so that $x(\epsilon_2)< x(\epsilon_1)$ and $x(\epsilon_2)< x(\epsilon_3)$, then  the following  polygon with vertices and edges of the form (depending on the orientation, we take the symbol "$*$" to be either $m$ or $M$):
	 \begin{align*}
	 	\beta(o^*_1)&\longrightarrow\beta(\epsilon_1)\longrightarrow\beta(\epsilon_2)\longrightarrow\beta(\epsilon_3),
	 \end{align*}
	 contradicts convexity.
	 The case: $x(\epsilon_1)< x(\epsilon_2)$ and $x(\epsilon_3)< x(\epsilon_2)$ is similar.
	 
	  
	  The other choices for $\beta(\tau_j)$ are analogous ($\tau_j=o_k$ for some $k\in\{1,2,3,4\}$), only the order in which one shows monotonicity of $x(t),y(t)$ depends on  which coordinate is extremized  by $\beta(\tau_j)$. 

\vspace{0.2cm}
	
	 We define two supporting axes  for each non-trivial arc $\beta([\tau_1,\tau_2])$ as follows: 	 
		$$
		\big\{ (1-t)\beta(\tau_j)+tZ \ |\ 0\leq t\leq 1 \big\}
	\hspace{0.3cm}\mbox{where}\hspace{0.3cm}
	Z=\binom{\langle \beta(\tau_1),e_1\rangle}{\langle \beta(\tau_2),e_2\rangle}\in\R^2.
	$$
	These line segments above are parallel to the axes, and we denote them accordingly by $L_x$ and $L_y$. Let $\gamma=\beta([\tau_1,\tau_2])$ and $Q$ be the identity matrix, then
	$$ I_\gamma^{\fatone}(K)\leq I_{L_x}^{\fatone}(K)+I_{L_y}^{\fatone}(K)$$
	by following argument: the coordinates of $\beta(t)$ are monotonous  for $t\in[\tau_1,\tau_2]$ (say both are increasing), then $\beta$ exits a fundamental domain $\frac{1}{K}\Omega^\fatone_0+\frac{1}{K}w$, where $w\in\Lambda^\fatone$, only by leaving it trough the top or right side.
	\begin{enumerate}
		\item 	If $\beta$ leaves through the right side, $I_\gamma^{\fatone}(K)$ and $I_{L_x}^{\fatone}(K)$ increase by one, 
		\item or else $I_\gamma^{\fatone}(K)$ and $I_{L_y}^{\fatone}(K)$ increase by one.
	\end{enumerate}
	 Thus, with $\mathrm{length}(L_x)=c$ and $\mathrm{length}(L_y)=d$, we have
	$$ I_\gamma^{\fatone}(K)\ \leq\  I_{L_x}^{\fatone}(K)+I_{L_y}^{\fatone}(K)\ \leq\  (c+d)K+4.$$
	We further use the inequality $c+d\leq \sqrt{2}\sqrt{c^2+d^2}$ valid for all $c,d\in\R$, to derive
	$$I_\gamma^{\fatone}(K)\ \leq\ \sqrt{2}\sqrt{c^2+d^2}\cdot K+4\ \leq\ \sqrt{2}\ \mathrm{length}(\gamma)\cdot K+4,$$
	where we used that the shortest path between $\beta(\tau_1)$ and $\beta(\tau_2)$ has length 	$\sqrt{c^2+d^2}$.
	The same reasoning applies to all sub-intervals (which are at most five), and after summing up and taking into account that every self-intersection counts a certain domain twice, we obtain 
	$$I_\beta^{\fatone}(K)\leq \sqrt{2}\  \mathrm{length}(\beta)\cdot K+19n+1-m.$$
	We will reduce the general case to the one above, by regarding the curve $\gamma=Q^{-1}\beta$. It is clear that $I_\gamma^{\fatone}(K)= I_\beta^{Q}(K)$.  Regularity of a curve is not affected by invertible matrices, neither is the notion of $n$-convexity nor the values $t_1,\ldots,t_n$.
\end{proof}
Note that the constant $\sqrt{2}$ in Lemma \ref{lem_sharplengthbound} cannot be improved, as the example of the translated diagonal of $I^2$ with $Q=\fatone$ already shows.

\begin{proof}[Proof of Lemma \ref{lemma:amount_intersections}]
Note that after applying the projection $\Gamma$ any sub-pixel is a square in $\mathbb{R}^2$ with diagonals parallel to the axes. 
In order to apply Lemma \ref{lem_sharplengthbound} we rescale the rectangle $R$ by $1/ \sqrt{2}$ in each coordinate, so that any base pixel has side-length $\frac{1}{4}$.
Furthermore, we rotate the scaled rectangle to obtain that any pixel is axis-parallel. 

Each base pixel contains exactly $N/12$ sub pixels and by construction we have $N/12=2^{\ell}$ for some $\ell \in \mathbb{N}$.
Boundaries of sub pixels are equally spaced lines inside each base pixel. 
This implies that (after the rotation and rescaling) corners of sub pixels are contained in the lattice $2^{-\ell-2}\mathbb{Z}^2$.

After the rescaling, the length of $\Gamma(\partial C)$ changes by a factor of $1/\sqrt{2}$. By Corollary \ref{cor_capsconvex} and Lemma \ref{lem_sharplengthbound} with $K= 2^{\ell+2}$, we obtain the result.
\end{proof}

	\subsection{Proof of Proposition \ref{prop:length_bound_cap}}
	
\label{subsec:prop1proof}	

We begin with the following lemma, which is crucial to proving Proposition \ref{prop:length_bound_cap}. The reader should refer to Figure \ref{fig:trapezoid} for the setting of the lemma.

\begin{lemma}\label{lem_trapezoidbound}
	Suppose $\beta$ is a rectifiable curve with endpoints at opposite corners of a trapezoid; the intersection of each ray $r_j$ from $f$ with $\beta$ is a connected set;  $a = \text{\length}(A)$ and $c = \text{length}(C) > d = \text{length}(D)$;  and $\lvert \text{slope}(A) \rvert \le \lvert\text{slope}(B) \rvert$, where $r_j, f, \beta, A, B, C, D$ are as in Figure \ref{fig:trapezoid}. Assume further that $\beta$ is monotone in the $y$-coordinate. Then
	$$\mbox{length}(\beta)\leq c+a.$$
\end{lemma}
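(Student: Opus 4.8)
The idea is to compare the curve $\beta$ to a monotone staircase and exploit that both the slope condition and the monotonicity in the $y$-coordinate let us bound the total horizontal travel of $\beta$ by something controlled by the trapezoid geometry. Concretely, set up coordinates so that the $y$-axis is vertical and write $\beta(t) = (x(t), y(t))$ with $y$ monotone; by reparametrizing we may take $y$ itself as the parameter, so $\beta$ is the graph $x = x(y)$ over an interval $[y_0, y_1]$, with endpoints at the two opposite corners of the trapezoid. The length of $\beta$ is $\int \sqrt{1 + x'(y)^2}\, dy \le (y_1 - y_0) + \int |x'(y)|\, dy$, i.e. length$(\beta) \le$ (vertical extent) $+$ (total variation of the horizontal coordinate). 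The vertical extent is a fixed quantity determined by the trapezoid; the task is to bound the total horizontal variation of $\beta$ by the remaining budget $c + a - (\text{vertical extent})$.

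The first real step is to identify the "vertical extent" with a side of the trapezoid (or a simple combination of $c$, $d$, and the parallel sides) using the labeling in Figure \ref{fig:trapezoid}: since $\beta$ runs between opposite corners and $C$, $D$ are the two parallel sides with $c > d$, the vertical drop from one corner to the other is at most $c$ (the longer parallel side being "vertical enough" after the normalization, or more precisely the rays $r_j$ from $f$ sweeping across give the comparison). The second step is the key one: the connectedness of each $r_j \cap \beta$ means $\beta$ is "radially monotone" about $f$ — it never backtracks toward $f$ along any ray — and this, combined with $y$-monotonicity, forces the horizontal excursions of $\beta$ to be dominated by the horizontal spread measured along $A$. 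Here the hypothesis $|\text{slope}(A)| \le |\text{slope}(B)|$ enters: it guarantees that the side $A$ is the "flatter" of the two non-parallel sides, so the horizontal width swept by the fan of rays between the two corners, measured against $\beta$'s monotone profile, is controlled by $a$ rather than by the length of $B$. I would make this precise by splitting $\beta$ at the ray(s) through the relevant corner of the trapezoid and bounding the horizontal variation on each piece by the horizontal extent of $A$ on one side and $0$ (or a negligible amount) on the other, so that $\int |x'(y)|\, dy \le a$ up to the part already absorbed into the vertical term.

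I expect the main obstacle to be the bookkeeping of the geometry in Figure \ref{fig:trapezoid} — precisely which corner plays which role, which ray separates the two "monotone regimes" of $\beta$, and making the inequality length$(\beta) \le c + a$ come out with exactly the constant claimed rather than with an extra error term. The radial-monotonicity hypothesis (connectedness of $r_j \cap \beta$) is clearly there to rule out spiraling, and the $y$-monotonicity rules out vertical zig-zag, so together they should pin $\beta$ inside a region whose two "extremal paths" from corner to corner are (i) the path that goes vertically then along (a translate of) $A$, of length $\le c + a$, and (ii) symmetric variants; any admissible $\beta$ is then sandwiched in length between these. Turning that sandwiching into the clean bound, without case-chasing every orientation of the trapezoid, is where care is needed — but since the slope hypothesis is explicitly designed to make $A$ the flat side, I anticipate the decomposition "vertical part $\le c$, horizontal part $\le a$" goes through directly once the figure is set up correctly.
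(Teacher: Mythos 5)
Your strategy---reparametrize by $y$, bound $\mathrm{length}(\beta)\le\int\bigl(1+|x'(y)|\bigr)\,dy$, and charge the vertical extent to $c$ and the horizontal variation to $a$---is a genuinely different route from the paper's, and as sketched it has two gaps. First, the coordinate ($\ell^1$-type) decomposition cannot produce the Euclidean bound $c+a$ in general: $A$ and $C$ are slanted sides, so even for the straight diagonal your method bounds the length by $|\Delta x|+|\Delta y|$ between the opposite corners, which can be as large as $\sqrt{2}\,\|P-Q\|_2$ and can strictly exceed $c+a$ (take $C$ at slope $1$ and $B$ short and nearly horizontal). The claims ``vertical drop $\le c$'' and ``horizontal variation $\le a$'' are therefore not consequences of the hypotheses but additional assumptions about the orientation of the trapezoid. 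Second, and more seriously, the bound $\int|x'(y)|\,dy\le a$ is the entire content of the lemma: $\beta$ is not assumed monotone in $x$, so this integral is a total variation, and the radial hypothesis (each $r_j\cap\beta$ connected, i.e.\ the angular coordinate about $f$ is monotone) does not reduce it to $|\Delta x|$ after splitting at a single ray. Between any two rays the curve may still oscillate horizontally while descending in $y$; controlling this requires the full fan of rays, not a one- or two-piece split.

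The paper's proof avoids both problems by staying Euclidean and localizing the straight-line estimate. It first notes that for the diagonal itself, $\mathrm{length}(\beta)=\sqrt{c^2+b^2-2cb\cos(\angle)}\le c+b\le c+a$. For general $\beta$ it places $m+1$ equidistant points on $C$, joins them to $f$ by rays, and uses the connectedness hypothesis to obtain an inscribed polygon in $\beta$ whose vertices are the successive intersections with these rays. Each chord $\rho_j$ is a diagonal of a sub-trapezoid, so $\mathrm{length}(\rho_j)\le c_j+\max\{a_j,b_j\}\le c/m+a_j'$, where $a_j'$ is the length of the projection of $A_j$ onto $A$; summing telescopes to $c+a$, while the inscribed polygon lengths converge to $\mathrm{length}(\beta)$ as $m\to\infty$. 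If you want to salvage your approach, you would essentially have to reproduce this fan subdivision to control the horizontal variation, at which point the coordinate decomposition buys nothing and still costs a constant; I recommend adopting the chord/fan argument instead.
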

\begin{proof}

	 If $\beta$ is a straight line, we use the well known formula
	$$\mbox{length}(\beta)=\sqrt{c^2+b^2-2cb\cos(\angle)}\leq c+b\leq c+a.$$
	Next, for general $\beta$ and $m\in\N$, we distribute $m+1$ equi-distant points $x_j$ (with $x_0,x_m$ being corners) on the line-segment $C$, and connect each $x_j$ with $f$ by a straight line. This way we obtain $m$-many sub-trapezoids with corners in intersection points of $\beta$ with the rays $r_j$. Let the sides of the $j$th such trapezoid be denoted by $A_j,B_j,C_j,D_j$ as depicted in the right-hand side of Figure \ref{fig:trapezoid}, and their lengths by $a_j, b_j, c_j, d_j$ respectively. Note that the slope $A_j$ is bigger than that of $A$, and the bottom sides $C_j$ are of length smaller than $c/m$. Thus the length of the diagonal $\rho_j$ of this sub-trapezoid satisfies
	$$ \mbox{length}(\rho_j)\leq c_j+\max\{a_j,b_j\}\leq \frac{c}{m}+ a_j',$$
	where $a_j'$ is the length of the projection of $A_j$ onto $A$.
	We sum both sides over $j$, and as $m$ increases to infinity, the right hand side always sums to $c+a$, but the left hand side will converge to the length of $\beta$.
\end{proof}

We remark that Lemma \ref{lem_trapezoidbound} extends to the case where $d > c$, the roles of $A$ and $B$ are interchanged, by mirror symmetry.

\begin{figure}[h]
	\centering
	\includegraphics[width=0.8\linewidth]{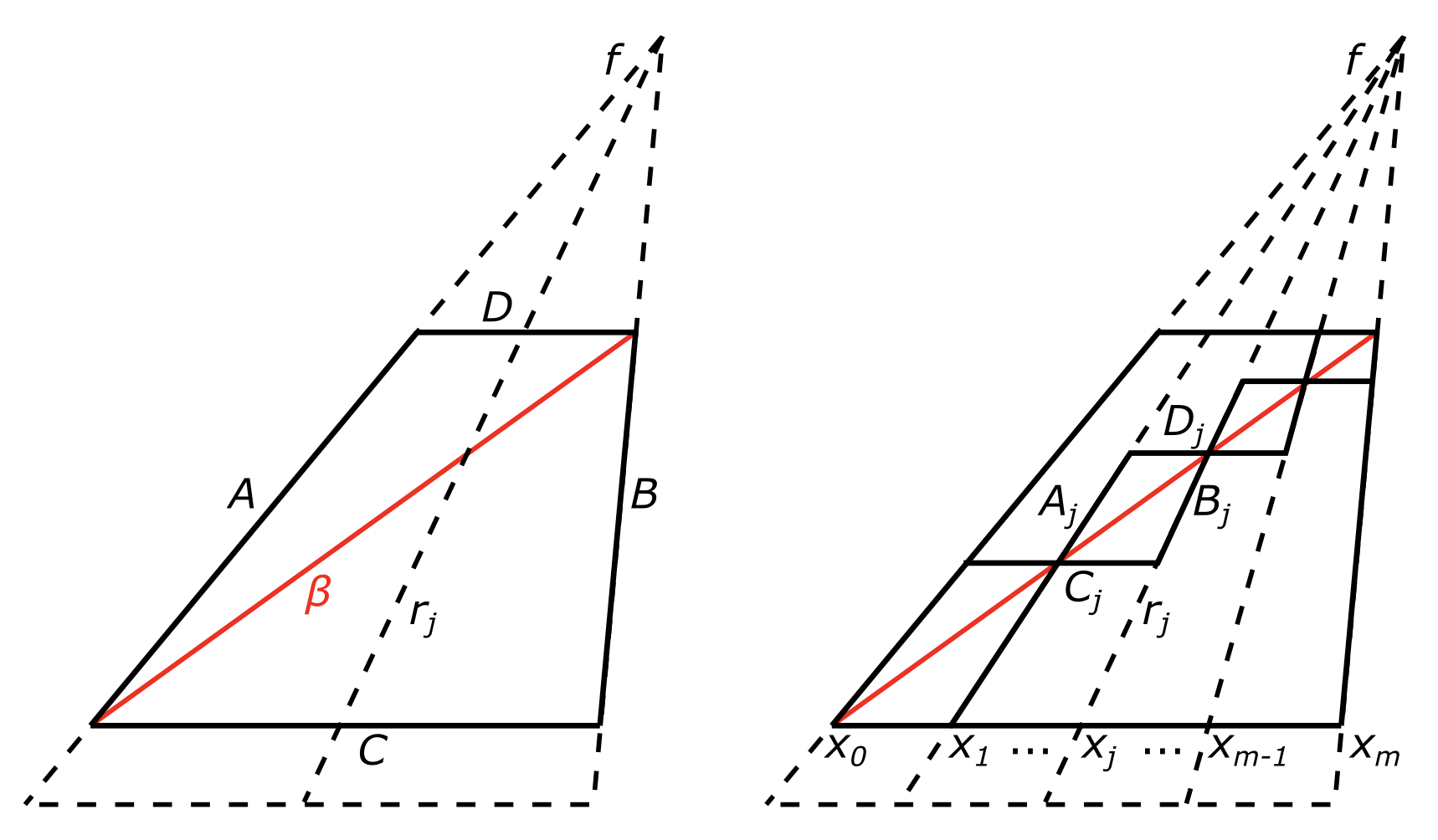}
	\caption[short]{The setting for Lemma \ref{lem_trapezoidbound}.} 
	\label{fig:trapezoid}
\end{figure}

We now proceed with the proof of Proposition \ref{prop:length_bound_cap}, starting with some notation.\\

If $w=\gamma(\phi_w,\theta_w)$, then due to symmetry we can restrict ourselves to $\phi_w,\theta_w\in(0,\pi/2]$ and $t\in[0,1)$.
	For if $\sin(\theta_w)=0$, then $w$ is one of the poles $p_n$ or $p_s$ and $\beta$ is contained in a straight line-segment with length bounded by 2.
	
	In the following we refer to the `northern separation' as the boundary between the north pole cap and the equatorial belt, i.e. $\s \cap \{z = 2/3\}$, and the `northern region' as the north pole cap. The `southern separation' and `southern region' are defined analogously.

Before we proceed by case distinction, we will introduce some points of interest:
$$p_{top}:=\gamma\big(\phi_w,\lvert \arccos(t)-\theta_w \rvert \big),\hspace{0.3cm} p_{bottom}:=\gamma\big(\phi_w,\arccos(t)+\theta_w\big)\in \partial C(w,t)$$ 
are closest to the north and south pole respectively.
Let $M_w\in\SO$ denote the rotation matrix, with axis of rotation given by $w$ and rotation angle of $\frac{\pi}{2}$. Set
$$p_{left}:=M_w(p_{top}),\hspace{0.5cm}
p_{right}:=M_w(p_{bottom}), $$ 
and note that both points have the same theta-angle $\theta_*$. Thus if we cut $C(w,t)$ by great circles going through $p_{top},p_{bottom}$ and $p_{left},p_{right}$, we obtain four parts of equal area.
 If $\partial C(w,t)$ crosses the northern separation, then it does so at 
 \begin{align*}
 	p_{ins1}&:=\gamma\Big(\phi_w-\arccos\Big(\frac{3t-2c_w}{\sqrt{5}s_w}\Big),\arccos\big(\frac{2}{3}\big)\Big),\\
 	p_{ins2}&:=\gamma\Big(\phi_w+\arccos\Big(\frac{3t-2c_w}{\sqrt{5}s_w}\Big),\arccos\big(\frac{2}{3}\big)\Big).
 \end{align*}
(These points were obtained using parametrizations \eqref{parametrization1} and \eqref{parametrization2}.) This implies that moving from $p_{top}$ to $p_{bottom}$ we leave the northern part at $p_{ins1}$.
We will use the image under $\Gamma$ of the points above (see Figure \ref{fig:sphereandgamma}), and we will denote them
$$p^\Gamma_j=\Gamma(p_j)\hspace{1cm}\mbox{for}\hspace{1cm}j\in\big\{top,bottom,left,right,ins1,ins2\big\},$$
and denote by $\beta_{1,2}$ the curves which are the image of the parametrizations \eqref{parametrization1}, \eqref{parametrization2} under $\Gamma$ -- note that it is not necessary to differentiate between $\beta_1$ and $\beta_2$; 
even though they will behave different in general, the bounds we give are valid for both. 
Thus in the sequel it is enough to consider the curve $\beta_1$ and double bounds in the end, when we talk about $\beta = \Gamma(\partial C)$.

We will next list all cases we have to consider in order to complete the proof and proceed thereafter by case distinction.
Cases that follow by symmetry (i.e. in the southern region) will not be considered.

\vspace{0.3cm}

I. Assume the cap satisfies $C(w,t)\subset C(\gamma(\phi,\frac{\pi}{2}),0)$ for some $\phi$.

\hspace{0.2cm}Ia) The cap is contained in the equatorial belt.

\hspace{0.2cm}Ib) The cap is contained in the northern region.

\hspace{0.2cm}Ic) The cap intersects the equatorial belt and the northern region, but 

\hspace{0.7cm} $p^\Gamma_{left},p^\Gamma_{right}$  are contained in the equatorial belt.

\hspace{0.2cm}Id) The cap intersects the equatorial belt and the northern region, where 

\hspace{0.6cm} $p^\Gamma_{left},p^\Gamma_{right}$ are contained in the northern region.

\hspace{0.2cm}Ie) The cap intersects all regions.

\vspace{0.3cm}

II. Assume the cap $C(w,t)$ contains only one pole, say the north pole.

\hspace{0.2cm}IIa) The cap is contained in the equatorial belt.

\hspace{0.2cm}IIb) The cap is contained in the northern region.

\hspace{0.2cm}IIc) The cap intersects the equatorial belt and the northern region, but 

\hspace{0.7cm} $p^\Gamma_{left},p^\Gamma_{right}$  are contained in the equatorial belt.

\hspace{0.2cm}IId) The cap intersects the equatorial belt and the northern region, where 

\hspace{0.6cm} $p^\Gamma_{left},p^\Gamma_{right}$ are contained in the northern region.

\hspace{0.2cm}IIe) The cap intersects all regions.

\medskip
\noindent
Before we start we want to state some additional properties of $\beta$ that will be useful.
By definition of the projection $\Gamma$, the curve $\beta $ is contained in the rectangle $R = [0,2] \times [-1/2,1/2]$. 
The equator corresponds a vertical line at height $0$.
The southern and northern separation lines are given by vertical lines at height $-1/4$ and $1/4$ respectively. 

Given $p_j^\Gamma$ and $p_k^\Gamma$, where $j,k \in \big\{top,bottom,left,right,ins1,ins2\big\}$, 
we use $\vert \langle p_j^\Gamma - p_k^\Gamma, e_1 \rangle \vert$ to denote the difference in the $x$-axis of the line \textit{inside} the cap (after applying $\Gamma$).
Note that due to the periodicity this is a slight abuse of notation. \\

\begin{description}
	\item[\textbf{Case Ia)}]  $\beta$ is a function of the $y$-coordinate and  monotone between the points $(p^\Gamma_{top},p^\Gamma_{left})$, $(p^\Gamma_{left}, p^\Gamma_{bottom})$, $(p^\Gamma_{bottom},p^\Gamma_{right})$, and $(p^\Gamma_{right},p^\Gamma_{top})$. 
	The difference of the height of any two of these points is bounded by $1/4$, while the width between $p_{right}^\Gamma$ and $p_{left}^\Gamma$ is bounded by $2$.
	Monotonicity and the above bounds for the height and width yield a length bound for any of the above `sectors' of $1+1/4$, so $\mbox{length}(\beta) \leq 5$. 
	
	\item[\textbf{Case Ib)}] Note that the images of boundaries of hemispheres going through the north and south poles are mapped to polygons; and in the northern region, the situation is like the setup in Lemma \ref{lem_trapezoidbound} with $f=\Gamma(p_n)$. Since hemispheres go through $\partial C(w,t)$ at most twice, we just have to cut $C(w,t)$ into four parts of equal area as mentioned further above in this proof, and consider the parts of $\beta$ belonging to these four pieces, and apply Lemma \ref{lem_trapezoidbound} four times. Since only the height and width of the image of $\beta$ matter for the bound of the lemma, we use the maximal slope 1 and set 
	$$a=2\sqrt{2}\ \langle p^\Gamma_{top}-p^\Gamma_{bottom},e_2 \rangle\leq \frac{1}{\sqrt{2}},
	\hspace{0.3cm}$$ 
	and 
	$$c=2\lvert\langle p^\Gamma_{left}-p^\Gamma_{right},e_1 \rangle\rvert\leq 4.$$
 Thus
	$$\mbox{length}(\beta)\leq a+c = 4+\frac{1}{\sqrt{2}}.$$
	
	\begin{figure}[h!]
\begin{center}
\includegraphics[scale=0.5]{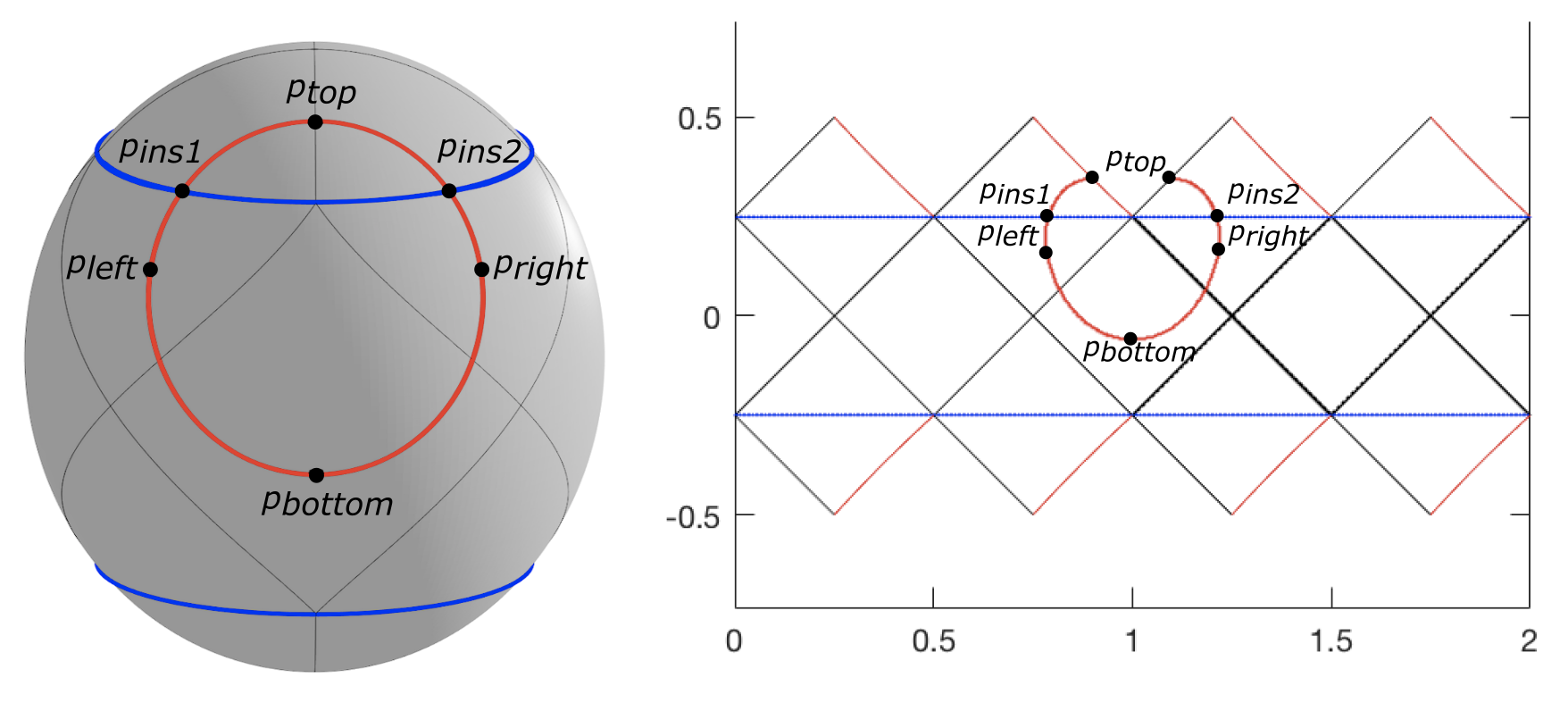}
\caption{The setting for case Ic).}
\label{fig:sphereandgamma}
\end{center}
\end{figure}

	\item[\textbf{Case Ic)}] In the northern region we proceed as in the case Ib), where we use 	\begin{align*}
		a_1=2\sqrt{2}\ \langle p^\Gamma_{top}-p_{ins1}^\Gamma,e_2 \rangle
		\qquad \text{and} \qquad
		 c_1=\lvert \langle p^\Gamma_{ins1}-p^\Gamma_{ins2},e_1 \rangle \rvert.
	\end{align*}
Two applications of Lemma \ref{lem_trapezoidbound} yield a bound of $a_1 + c_1$ for the northern part. 
(Note that $\langle p^\Gamma_{top}-p_{ins1}^\Gamma,e_2 \rangle = \langle p^\Gamma_{top}-p_{ins2}^\Gamma,e_2 \rangle$, since both $p^\Gamma_{ins1}$ and $p^\Gamma_{ins2}$ have the same height.)

The part of $\beta$ falling into the equatorial belt is a function of the $y$-coordinate, first monotonously decreasing from $p^\Gamma_{ins1}$ to $p^\Gamma_{left}$, and then monotonously increasing from $p^\Gamma_{left}$ to $p^\Gamma_{bottom}$.
So we can bound the length by 
\[
\langle p^\Gamma_{ins1} - p^\Gamma_{left}, e_2 \rangle
+
\vert \langle p^\Gamma_{ins1} - p^\Gamma_{left}, e_1 \rangle \vert 
= a_2 + c_2,
\]
and 
\[
\langle p^\Gamma_{left} - p^\Gamma_{bottom}, e_2 \rangle
+
\vert \langle p^\Gamma_{left} - p^\Gamma_{bottom}, e_1 \rangle \vert 
= a_3 + c_3.
\]
In a similar way we obtain $a_4,c_4$ and $a_5,c_5$ for the parts of $\beta$ which are `between' $p^\Gamma_{bottom}, p^\Gamma_{right}$ and $p^\Gamma_{right}, p^\Gamma_{ins2}$ respectively.  

To obtain the final bound we estimate $\sum_{i=1}^{5}(a_i+c_i)$.
Thereto we observe that $\sum_{i=1}^{5}c_i = 2\vert \langle p^\Gamma_{left}-p^\Gamma_{right},e_1 \rangle \vert  \leq 4$. 

Moreover, $a_1 \leq 1/\sqrt{2}$ and  
$\sum_{i=2}^{5}a_i = 2\langle p^\Gamma_{ins1} - p^\Gamma_{bottom}, e_2 \rangle  \leq 1.$
This gives a total bound of 
\[
\text{length}(\beta) \leq 5+ \frac{1}{\sqrt{2}} .
\]

	\item[\textbf{Case Id)}] This case is very similar to case Ic).
	To bound the `upper half' of $\beta$, i.e., the part above the line through $p^\Gamma_{left}$ and $p^\Gamma_{right}$ we obtain by two applications of Lemma \ref{lem_trapezoidbound} 
	\[
	2\sqrt{2}\langle p^\Gamma_{top}-p^\Gamma_{left}, e_2 \rangle +
	\vert \langle  p^\Gamma_{left} - p^\Gamma_{right}, e_1 \rangle \vert =  a_1 + c_1.  
	\]
	Moreover, the part between $p^\Gamma_{left}$ and $p^\Gamma_{ins1}$ is bounded by Lemma \ref{lem_trapezoidbound} by 
	\[
	\sqrt{2}\langle p^\Gamma_{left}-p^\Gamma_{ins1}, e_2 \rangle +
	\vert \langle  p^\Gamma_{left} - p^\Gamma_{ins1}, e_1  \rangle \vert =  a_2 + c_2.
	\]
	Similarly we define $a_3, c_3$ for the part between $p^\Gamma_{right}$ and $p^\Gamma_{ins2}$.
	
	The equatorial belt region is treated as before, yielding a bound of 
	\[
	2\langle p^\Gamma_{ins1} - p^\Gamma_{bottom} ,e_2\rangle + \vert \langle p^\Gamma_{ins1} - p^\Gamma_{ins2}, e_1 \rangle \vert = a_4 + c_4.  
	\] 
	Note that $\sum_{i=1}^4 c_i = 2 \vert \langle p^\Gamma_{left} - p^\Gamma_{right}, e_1 \rangle \vert\leq 4$.
	
	Further, $\sum_{i=1}^3a_i = 2\sqrt{2}\langle p^\Gamma_{top} - p^\Gamma_{ins1}, e_2 \rangle \leq 1/\sqrt{2}$
	 and $a_4 \leq 1$. In the end we sum up everything and get the same bound as in case Ic).
	\item[\textbf{Case Ie)}] 
	This case can be treated using the same arguments as for cases Ic) and Id). 
	The bound for the width, i.e., the sum of $c_i$'s, is still bounded by $4$. 
	However, since the cap also goes through the southern region the bound for the height increases by $1/\sqrt{2}$, compared to the previous cases. 
	In conclusion we end up with $\mbox{length}(\beta) \leq 5+ \sqrt{2} $.\\
	
	\item[\textbf{Cases II*)}] In the cases II*) the cap contains the north pole. 
It suffices to treat caps where the pole is part of the interior of the cap, as the cases I*) cover the cases where the pole is on the boundary. 
	
	\item[\textbf{Case IIa)}]The curve $\beta_1$ is a function of the $y$-coordinate and is monotone between the points $(p^\Gamma_{top},p^\Gamma_{bottom})$  - note that the co-domain of $\Gamma$ is a cylinder. Thus the  $\mbox{length}(\beta_1)$ in this case is bounded by $3/2$.
	So the total bound in this case is $3$.
	
	\item[\textbf{Case IIb)}] 
	We note that $\beta_1$ is monotonic between $p^\Gamma_{top}$ and $p^\Gamma_{bottom}$.
	Hence Lemma \ref{lem_trapezoidbound} is applicable, and using the maximal slope $1$ it follows 
	\[
	\mbox{length}(\beta_1) \leq 2\sqrt{2} \langle p^\Gamma_{top} - p^\Gamma_{bottom},e_2 \rangle 	+ 2  
	\leq  
	\frac{1}{\sqrt{2}} +2;	\]
	\item[\textbf{Case IIc)}] 
	We use similar arguments as in cases Ic) and IIb).
	The total width is bounded by $4$, whereas the height bound is $1/\sqrt{2}+1$.
	Thus $\mbox{length}(\beta) \leq 5+ 1/\sqrt{2}$.
	
\item[\textbf{Case IId)}] 
Similar as before we have a width bound of $4$ and a height bound of $1+1/\sqrt{2}$, yielding $\mbox{length}(\beta) \leq 5+ 1/\sqrt{2}$.

\item[\textbf{Case IIe)}] 
We use the same arguments as in cases Ie) and IIb) to obtain a bound of $\mbox{length}(\beta) \leq 5+\sqrt{2}$. 
\end{description}

This ends the case distinction and hence the proof of Proposition \ref{prop:length_bound_cap}. \qed

%
%
%

\subsection{Curvature calculations}

\label{subsec:curvcalc}

For the proof of Corollary \ref{cor_capsconvex} we will now calculate the signed curvature $\kappa$ of the planar curve $\Gamma(\partial C) \subset R$ and give a universal bound on the number of zeros. Here $x$ and $y$ are the coordinates of $\Gamma \circ \gamma \circ (\text{param.}  \eqref{parametrization1})$ and $x'$ and $y'$ denote the derivatives of $x$ and $y$ with respect to $\theta$:
 $$\kappa=\frac{x'y''-y'x''}{\big((x')^2+(y')^2\big)^{3/2}}.$$

We see immediately that if $w$ is a pole, then $\kappa(\theta)\equiv0$ since the $y$ value is constant. Similarly, if $\partial C$ goes through both poles, then $\kappa(\theta)\equiv0$ since the $x$ value is constant. These caps are treated separately in Corollary \ref{cor_capsconvex}, and hence we assume in the subsequent calculations that $\theta_w\in (0,\frac{\pi}{2}]$, and $t>0$ in case $\theta_w=\frac{\pi}{2}$.

\subsubsection{Equatorial belt region}

\label{eqbelt}

 From the definition of $\Gamma$ it follows that the part of $\Gamma(\partial C)$ that belongs to the equatorial belt has the following parametrization (note that $\sqrt{5}/3\leq\sin(\theta)\leq 1$):
$$x(\theta)=\frac{1}{\pi}\arccos\Big(\frac{t-\cos(\theta)c_w}{\sin(\theta)s_w}\Big)+\frac{\phi_w}{\pi},$$
and
$$y(\theta)=\frac{3}{8}\cos(\theta).$$
The derivatives needed for the curvature are as follows
\begin{align*}
x'&=\frac{1}{\pi}\frac{-\sin(\theta)s_w}{\sqrt{\sin(\theta)^2s_w^2-(t-\cos(\theta)c_w)^2}}\frac{\sin(\theta)^2s_wc_w-(t-\cos(\theta)c_w)\cos(\theta)s_w}{\sin(\theta)^2s_w^2}\\
&=-\frac{1}{\pi A}\frac{c_w-t\cos(\theta)}{\sin(\theta)},
\end{align*}
where we use the abbreviation $$A=\sqrt{\sin(\theta)^2s_w^2-(t-\cos(\theta)c_w)^2}.$$
Further
\begin{align*}
\pi x''&=\frac{\sin(\theta)\cos(\theta)s_w^2-(t-\cos(\theta)c_w)\sin(\theta)c_w}{\big(\sin(\theta)^2s_w^2-(t-\cos(\theta)c_w)^2\big)^{3/2}}\frac{c_w-t\cos(\theta)}{\sin(\theta)}\\
&\hspace{1cm}-\frac{t\sin(\theta)^2-(c_w-t\cos(\theta))\cos(\theta)}{\sqrt{\sin(\theta)^2s_w^2-(t-\cos(\theta)c_w)^2}\sin(\theta)^2}\\
&=\frac{\big(\cos(\theta)-tc_w\big)\big(c_w-t\cos(\theta)\big)}{A^3}-\frac{t-c_w\cos(\theta)}{A\sin(\theta)^2}
\end{align*}
and
\begin{align*}
y'(\theta)=-\frac{3}{8}\sin(\theta) \hspace{0.5cm} \text{and} \hspace{0.5cm} y''(\theta)=-\frac{3}{8}\cos(\theta).
\end{align*}
Thus $x'y''-y'x''=$
\begin{align*}
&\frac{3}{\pi 8 A}\Big(\big(c_w-t\cos(\theta)\big)\big(\cot(\theta)+\big(\cos(\theta)-tc_w\big)\frac{\sin(\theta)}{A^2}\big)-\frac{t-c_w\cos(\theta)}{\sin(\theta)}\Big)\\
&=\frac{3}{\pi 8 A}\Big(\cot(\theta)2c_w+\big(\cos(\theta)-tc_w\big)\big(c_w-t\cos(\theta)\big)\frac{\sin(\theta)}{A^2}-\frac{t(1+\cos(\theta)^2)}{\sin(\theta)}\Big),
\end{align*}
and 
$$
x'^2+y'^2=\frac{1}{\pi^2}\frac{1}{A^2}\frac{\big(c_w-t\cos(\theta)\big)^2}{\sin(\theta)^2}+\frac{9}{64}\sin(\theta)^2.
$$
Assuming $A \neq 0$, we obtain
$$ 
\kappa(\theta)=\frac{
	3A^2\Big(\cot(\theta)2c_w+\frac{1}{A^2}\big(\cos(\theta)-tc_w\big)\big(c_w-t\cos(\theta)\big)\sin(\theta)-\frac{t(1+\cos(\theta)^2)}{\sin(\theta)}\Big)
}{
	\pi 8 \Big(\frac{1}{\pi^2}\frac{\big(c_w-t\cos(\theta)\big)^2}{\sin(\theta)^2}+A^2\frac{9}{64}\sin(\theta)^2  \Big)^\frac{3}{2}.
}
$$
This equation can be continuously extended to $A = 0$. By the trigonometric angle sum formula,
\begin{align*}
A^2&=\big[\sin(\theta)s_w-t+\cos(\theta)c_w\big]\big[\sin(\theta)s_w+t-\cos(\theta)c_w\big]\\
&=\big[\cos(\theta-\theta_w)-t\big]\big[t-\cos(\theta+\theta_w)\big]
\end{align*}
is zero only for $\theta\in[\arccos(\tfrac{2}{3}),\arccos(-\tfrac{2}{3})]\cap\{\arccos(t)+\theta_w,\lvert \arccos(t)-\theta_w \rvert\}.$ We also make sure that $c_w-t\cos(\theta)$ is not zero when $A$ is, thus plugging in $\theta = \arccos(t)+\theta_w$ and $\theta = \lvert \arccos(t)-\theta_w \rvert$:
\begin{align*}
	c_w-t(c_wt-s_w\sqrt{1-t^2}) &= c_w(1-t^2)+s_wt\sqrt{1-t^2},\\
	c_w-t(c_wt+s_w\sqrt{1-t^2}) &= \sqrt{1-t^2}\sin\big(\arccos(t)-\theta_w\big).
\end{align*}
Since $0\leq t<1$, the first value above is always positive, while the second is not zero since $\theta=\arccos(t)-\theta_w\in\{0,\pi\}$ is not in the domain of the parametrization.

 Thus the question if the curvature changes sign is reduced to finding $\theta$ such that
$$ 
A^2\cos(\theta)2c_w+\sin(\theta)^2\big(\cos(\theta)-tc_w\big)\big(c_w-t\cos(\theta)\big)-A^2t(1+\cos(\theta)^2)=0
$$
which can be expanded in the notation $\sin(\theta)=s_\theta$, $\cos(\theta)=c_\theta$ as
\begin{align*}
&2c_\theta c_w s_\theta^2 s_w^2-2c_\theta c_w t^2+4t c_\theta^2 c_w^2-2 c_\theta^3 c_w^3 +s_\theta^2 c_\theta c_w-tc_\theta^2 s_\theta^2-tc_w^2s_\theta^2+t^2c_\theta c_ws_\theta^2\\
&-ts_\theta^2s_w^2+t^3-2t^2c_\theta c_w+tc_\theta^2c_w^2 - tc_\theta^2 s_\theta^2s_w^2+t^3c_\theta^2-2t^2c_\theta^3 c_w+tc_\theta^4c_w^2 = 0.
\end{align*}
Using that $s_\theta^2=1-c_\theta^2$, we see that
this is a fourth degree polynomial in $c_\theta$ and has at most 4 roots for $c_\theta\in[-\tfrac{2}{3},\tfrac{2}{3}]$. Therefore, there are at most $8$ zeros of curvature.
%


\subsubsection{North and south pole caps}
\label{NSpole}

Here we discuss the curvature of $\Gamma(\partial C)$ in the north and south pole caps. 
From the definition of $\Gamma$ it follows that the part of $\Gamma(\partial C)$ that falls in the north pole cap can be parametrized as follows. (The parametrization of the south pole cap is analogous, so we just focus on the north pole cap.)
$$x(\theta)=\frac{\phi}{\pi}-\frac{1}{\pi}\big(1-\sqrt{1-\cos(\theta)}\sqrt{3}\big)\cdot(\phi\hspace{-0.2cm}\mod \tfrac{\pi}{2}-\tfrac{\pi}{4}),$$
and by trigonometric half-angle identities
$$y(\theta)=\frac{1}{4}\big(2-\sqrt{1-\cos(\theta)}\sqrt{3}\big)=\frac{1}{2}-\sin\Big(\frac{\theta}{2}\Big)\frac{\sqrt{6}}{4}.$$
It follows that 
\begin{align*}
y'(\theta)=-\cos\Big(\frac{\theta}{2}\Big)\frac{\sqrt{6}}{8} \hspace{0.5cm} \text{and} \hspace{0.5cm} y''(\theta)=\sin\Big(\frac{\theta}{2}\Big)\frac{\sqrt{6}}{16}.
\end{align*}
As for the derivative of $x$, note that the factor $\phi\hspace{-0.2cm}\mod \frac{\pi}{2} $ can be written as $\phi(\theta)-j\frac{\pi}{2}$ for appropriate $j$ and $\theta$, thus in  the derivative we can ignore that part:
\begin{align*}
x'&=\frac{\phi'}{\pi}\Big(1 -1+\sqrt{1-\cos(\theta)}\sqrt{3}\Big)-\frac{4y'}{\pi}\big(\phi\hspace{-0.2cm}\mod \tfrac{\pi}{2}-\tfrac{\pi}{4}\big)\\
&=16\frac{\phi'y''}{\pi}-\frac{4y'}{\pi}\big(\phi\hspace{-0.2cm}\mod \tfrac{\pi}{2}-\tfrac{\pi}{4}\big)
\end{align*}
and thus, making use of the fact that $y''' = \frac{1}{4} y'$,
\begin{align*}
x''&=16\frac{\phi''y''-\phi'y'/4}{\pi}-\frac{4y''}{\pi}\big(\phi\hspace{-0.2cm}\mod \tfrac{\pi}{2}-\tfrac{\pi}{4}\big)-\frac{4y'\phi'}{\pi}\\
&=\frac{16}{\pi}\phi''y''-\frac{4y''}{\pi}\big(\phi\hspace{-0.2cm}\mod \tfrac{\pi}{2}-\tfrac{\pi}{4}\big)-\frac{8}{\pi}y'\phi'.
\end{align*}
Note that the term 
$$
(y')^2=\frac{3}{8^2}(1+\cos(\theta))
$$
is zero only if $\theta=\pi$, but since we restricted ourselves to the case $0\leq\theta_w\leq\frac{\pi}{2}$ and $0\leq t<1$ due to symmetry, this would mean that $t=0$ and $\theta_w = \frac{\pi}{2}$, but these caps can be disregarded here because they are dealt with separately in   Corollary \ref{cor_capsconvex}. Thus
\begin{equation}\label{eq_curvatureNPregion}
	\kappa(\theta)=\frac{16\frac{\phi'(y'')^2}{\pi}-
		\frac{16}{\pi}\phi''y''y'+\frac{8}{\pi}(y')^2\phi'}{\Big[\Big(16\frac{\phi'y''}{\pi}-\frac{4y'}{\pi}\big(\phi\hspace{-0.2cm}\mod \tfrac{\pi}{2}-\tfrac{\pi}{4}\big)\Big)^2+\frac{3}{8^2}(1+\cos(\theta))\Big]^{3/2}}.
\end{equation}

To show that the curvature is well defined, we proceed as in the previous section - with one exception: if the north pole is in $\partial C$, then $\theta=0$ is in the domain of parametrization. If this case  occurs, then we can use the parametrization \eqref{eq_northpoleparametrization}
and obtain subsequently with $\tau= \frac{ t}{\sqrt{1-t^2}}$ (note that $t = 0$ is excluded, because $\theta_w = \frac{\pi}{2}$ in that case):
$$\phi'=-\frac{1}{2}\frac{\tau+\tau\tan\big(\tfrac{\theta}{2}\big)^2}{\sqrt{1-\tau^2\tan\big(\tfrac{\theta}{2}\big)^2}} $$
and using the product formula for the derivative,
\begin{align*}
	\phi''&=-\frac{\tau}{4}\frac{\tau^2\tan\big(\tfrac{\theta}{2}\big)\big(1+\tan\big(\tfrac{\theta}{2}\big)^2\big)}{\Big(1-\tau^2\tan\big(\tfrac{\theta}{2}\big)^2\Big)^{3/2}} \big(1+\tan\big(\tfrac{\theta}{2}\big)^2\big)-\frac{\tau}{2}\frac{\tan\big(\tfrac{\theta}{2}\big)\big(1+\tan\big(\tfrac{\theta}{2}\big)^2\big)}{\sqrt{1-\tau^2\tan\big(\tfrac{\theta}{2}\big)^2}}\\
	&=-\frac{\tau}{2}\frac{\tan\big(\tfrac{\theta}{2}\big)\big(1+\tan\big(\tfrac{\theta}{2}\big)^2\big)}{\sqrt{1-\tau^2\tan\big(\tfrac{\theta}{2}\big)^2}}\left(\frac{\tau^2}{2}\frac{\big(1+\tan\big(\tfrac{\theta}{2}\big)^2\big)}{1-\tau^2\tan\big(\tfrac{\theta}{2}\big)^2} +1 \right).
\end{align*}
Thus the curvature is hence well defined for $0<\theta<2\arctan\big(\frac{\sqrt{1-t^2}}{t}\big)$ and can be continuously extended to zero by
$$\kappa(0)=\frac{16\frac{\phi'\cdot 0}{\pi}-
	\frac{16}{\pi}\cdot 0\cdot y''y'-\frac{8}{\pi}\frac{6}{8^2}\frac{\tau}{2}}{\Big[8^2\tau^2\frac{\phi'\cdot 0}{\pi}+\frac{6}{4\pi^2}\big(\phi_w\hspace{-0.2cm}\mod \tfrac{\pi}{2}-\tfrac{\pi}{4}\big)^2+\frac{6}{8^2}\Big]^{3/2}}\leq
-\frac{2^3}{\pi\sqrt{3}}\frac{t}{\sqrt{1-t^2}}.$$
A similar calculation, using the parametrization \eqref{eq_southpoleparametrization}, yields that $\theta=\pi$ can be treated in the same fashion. We hence also know that  the curvature is not zero for $\theta\in\{0,\pi\}$, and by continuity, also not in an neighborhood of these values. 

The curvature $\kappa(\theta)$ thus changes sign, if at all, at
$$\frac{16\phi'}{\pi}(y'')^2-
\frac{16}{\pi}\phi''y''y'+\frac{8\phi'}{\pi}(y')^2=0.
$$
We note that 
$$
y''y'=-\sin(\theta)\frac{3}{128}\hspace{0.5cm}\mbox{and}\hspace{0.5cm}(y'')^2=\frac{3}{16^2}(1-\cos(\theta)),
$$
thus we equivalently want, letting $\xi = \frac{\phi}{\pi}$ and multiplying by $\sin(\theta)A^3$,
$$
\sin(\theta)A^3\left(\xi'\frac{3}{16}(1-\cos(\theta))+
\frac{3\sin(\theta)}{8}\xi''+\frac{3}{8}(1+\cos(\theta))\xi'\right)=0;
$$
or if we multiply by $\frac{16}{3}$
\begin{equation*}
\sin(\theta)A^3\Big(2\sin(\theta)\xi''+(3+\cos(\theta))\xi'\Big)=0.
\end{equation*}
Next we use the expansion of $\xi' = x'(\theta)$ (this refers to the $x$ in \textsection \ref{eqbelt}) to obtain a polynomial in $c_\theta$ of degree at most five
which has at most 5 roots for $c_\theta\in[\tfrac{2}{3},1]$.

\subsubsection{Curvature for the other parametrization}
\label{otherparametrization}
If we had chosen to work with parametrization \eqref{parametrization2} instead, then in the equatorial belt region only the sign of $\kappa$ would have changed, as the derivatives of the $x$-values flip signs. In the polar cap regions the curvatures of the two parametrizations have opposing signs as can be seen from formula \eqref{eq_curvatureNPregion}, and are  zero for the same values $\theta_0$. In either region, a zero for the curvature of one paramatrization is also a zero for the other. Since the $y$-values are the same for both parametrizations, such zeros have the same $y$-value. \\

Note that the value of 36 in Corollary \ref{cor_capsconvex} is obtained by summing the total number of zeros of curvature for one parametrization, and then doubling it.

\section{Appendix: Formulas for Boundaries and Centers of Pixels}

Here we detail (for the reader's convenience) an indexing of pixels in the HEALPix tessellation at an arbitrary level $\ell$, and give formulas for their centers, in other words the HEALPix point set. This is an alternative to the indexing presented in \cite{Gorski} and may be useful for programming. 

As before, we use $\theta \in [0, \pi)$ to denote angle with the north pole, so $z = \cos(\theta)$. $\phi \in [0,2\pi)$ denotes the angle with the $x$ axis on the $xy$-plane. We define a pixel by $(s,k,\ell,r,c)$. Each index is described below.

\begin{itemize}
\item $s \in \{ -1, 0, 1 \}$ denotes whether the pixel is one of the north base pixels, one of the equatorial base pixels, or one of the south base pixels ($s =1$, $s=0$, $s=-1$, respectively).
\item $k \in \{0,1,2,3\}$ defines which of the four base pixels on level $s$ the pixel is in. For $s= \pm 1$, $k =0$ corresponds to the base pixel whose boundary touches $z = \pm \frac{2}{3}$ at the points $\phi = 0$ and $\phi = \frac{\pi}{2}$. For $s= 0$, $k = 0$ corresponds to the base equatorial pixel centered on the arc $\phi = 0$. For $k > 0$, this value of $k$ indicates a rotation by $\frac{k \pi}{2}$ radians in $\phi$.
\item $\ell \in \mathbb{N}_0$ denotes the resolution of the pixelation. In each base pixel, there are $4^{\ell}$ pixels of resolution $\ell$, in a grid with $2^{\ell}$ rows and $2^{\ell}$ columns.
\item $r \in \{1, \cdots, 2^{\ell}\}$ indexes the $2^{\ell}$ parallels to the northeast boundary in the grid of subpixels of the base pixel. In the north pole cap and equatorial belt, the indexing starts on the northeastern side of the base pixel and ends at the southwestern side; $r=1$ denotes the pixels that have the northeastern boundary of the base pixel as their boundary. In the south pole cap, the indexing starts on the southeastern side of the base pixel and ends at the northwestern side.
\item $c \in \{1, \cdots, 2^{\ell}\}$ indexes the $2^{\ell}$ parallels to the northwest boundary in the grid of subpixels of the base pixel.  In the north pole cap and equatorial belt, the indexing starts on the northwestern side of the base pixel; $c =1$ denotes the pixels that have the northwestern boundary of the base pixel as their boundary. In the south pole cap, the indexing starts on the southwestern side of the base pixel and ends at the northeastern side.
\end{itemize}

\emph{Example.} In Figure \ref{fig:pixelboundaries} below, assume that $\phi = 0$ is the arc on which the dark gray base pixel in the picture is centered, and that the north pole is the top vertex of the light gray base pixel. Then the pixel shaded in red has index $(1,0,3,5,3)$.

\begin{figure}[h!]
\begin{center}
\includegraphics[scale=0.5]{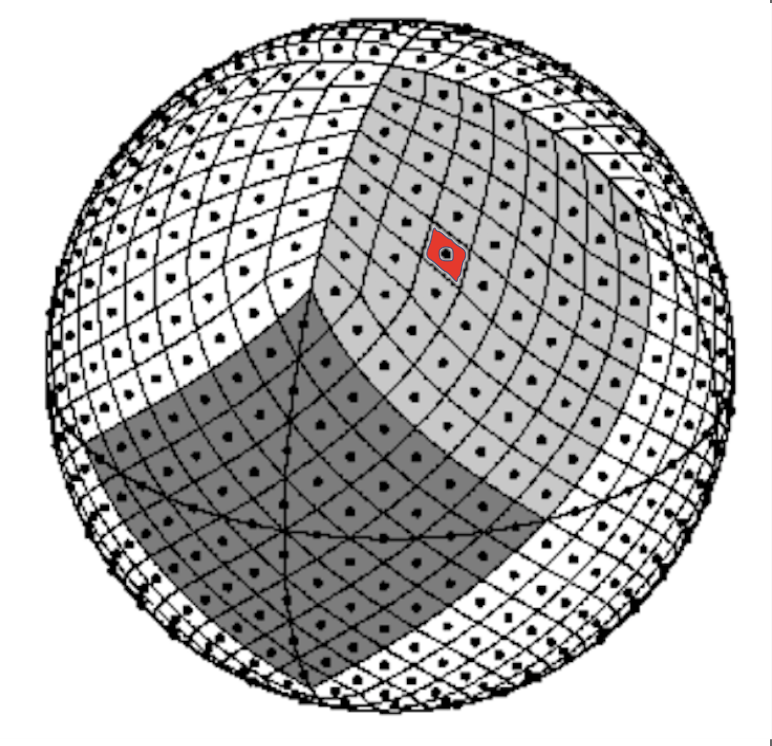}
\caption{ \copyright AAS. Reproduced (with slight modification) with kind permission from the authors of \cite{Gorski} and the Astrophysical Journal. See DOI: 10.1086/427976.}
\label{fig:pixelboundaries}
\end{center}
\end{figure}

Write $L = 2^{\ell}$.
The pixel boundaries of resolution parameter $\ell$ are obtained via curves $m^e_{j,\ell}$ from north to south, and curves $p^e_{j,\ell}$ from south to north. Here we use a slightly different indexing system from that used in \textsection \ref{subsec:boundaries}.  Let $\phi_k = \phi-\frac{k\pi}{2}$. Then 
\[m_{j,k,\ell}^e \sim \cos(\theta) = \frac{2}{3}-\frac{4j}{3L}-\frac{8\phi_k}{3\pi}\hspace{0.5cm} \text{  and  } \hspace{0.5cm} p_{j,k,\ell}^e \sim \cos(\theta) =  \frac{2}{3}-\frac{4j}{3L}+\frac{8\phi_k}{3\pi}.\]

A pixel's center $(z, \phi)$ is given by the shared vertex of the four subpixels of pixel $(s, k, \ell, r, c)$ in the next-finest tessellation at level $\ell+1$. We give explicit formulas below.

\subsection{Pixel centers contained in the equatorial belt}

\subsubsection{Pixels with s = 0}

%

Let $s = 0$ in the pixel indexing, so that the base pixel it is contained in lies entirely in the equatorial belt. The center of pixel $(0, k, \ell, r, c)$ , occurs at intersection point of the lines $m_{(2r-1),k,\ell+1}$ and $p_{(2c-1),k,\ell+1}$, which is at
$$\phi=\frac{\pi(r-c)}{4L}+\frac{k\pi}{2}$$
and
$$z =\frac{2}{3} + \frac{2(r+c-1)}{3L}.$$

\subsubsection{Pixels with s = 1}
We now describe centers of pixels of the form $(1,k,\ell,r,c)$ which are contained entirely in the equatorial belt, which occurs when $r+c > L+1$. (For the $s=-1$ case, the pixel centers are obtained by simply changing the sign of the $z$-value.)
%

The pixel's center $(z, \phi)$ is given by the shared vertex of the four subpixels of pixel $(1, k, \ell, r, c)$ in the next-finest tessellation at level $\ell+1$. This occurs at intersection point of the lines $m_{(2r-1),(k+1)_4,\ell+1}$ and $p_{(2c-1),k,\ell+1}$, which is at
$$\phi = \frac{\pi(r-c)}{4L}+\frac{(2k+1)\pi}{4}$$
and
$$z = \frac{4}{3}-\frac{2(r+c-1)}{3L}.$$



\subsection{Pixels contained in north and south caps or on their boundaries}


We concern ourselves only with the pixel centers that lie in the north pole cap or on its boundary, which occurs when $r+c \le L+1$ (again, the centers for pixels in the south pole cap are obtained by changing the sign of the $z$-value).

The center of pixel $(1,k,\ell,r,c)$ is given by the intersection of the curves $p_{2c-1,k,\ell}^n$ and $m_{2r-1,k,\ell}^n$, which occurs at 
$$\phi = k\frac{\pi}{2} + \frac{\pi(2c-1)}{4(r+c-1)}$$
and
$$z=1- \frac{(r+c-1)^2}{3L^2}.$$

\section*{Acknowledgements}
The first author is supported by long term structural funding -- Methusalem grant of the Flemish Government; the Austrian Science Fund (FWF): F5503 ``Quasi-Monte Carlo Methods''; and FWF: W1230 ``Doctoral School Discrete Mathematics.'' The second author is supported by Deutsche Forschungsgemeinschaft (DFG) within project 432680300 - SFB 1456 (subproject B02) and the Johannes Kepler University Linz. The third author is supported by DMS-2054606 through the US National Science Foundation. The authors thank Ryan Matzke for suggesting we collaborate and for many discussions on the subjects presented in this paper, including substantial work on the material in the appendix. We also thank Dmitriy Bilyk for helpful comments and suggestions. We thank  K.M. Górski, E. Hivon, A.J. Banday, B.D. Wandelt, F.K. Hansen, M. Reinecke and M. Bartelmann for the permission to use their graphics, as well as AAS publishing and The Astrophysical Journal. 

\end{document}